\documentclass[11pt]{amsart}
\pdfoutput=1
\usepackage[leqno]{amsmath}
\usepackage{amsmath,amsfonts,amsthm,amssymb}
\usepackage{epsfig}
\usepackage{graphicx}
\usepackage{cite}
\pagestyle{plain} \pagenumbering{arabic} \oddsidemargin1cm
\evensidemargin1cm \topmargin1cm \headheight0cm \headsep5mm
\topskip0cm \textheight20.5cm \textwidth15.5cm
\footskip1.5cm

\sloppy
\usepackage{epsfig}
\usepackage{color}
\usepackage{amsmath}
\usepackage{amssymb}
\newtheorem{lemma}{Lemma}
\newtheorem{proposition}{Proposition}
\newtheorem{theorem}{Theorem}
\newtheorem{corollary}{Corollary}

\theoremstyle{definition}

\theoremstyle{remark}

\newcommand{\Link}{\mathop{\mathrm{Link}}}
\newcommand{\Inc}{\mathop{\mathrm{Inc}}}
\newcommand{\CAT}{\mathop{\mathrm{CAT}}}

\begin{document}

\thispagestyle{empty}

\title{Ramified rectilinear polygons:\\ coordinatization by dendrons}

\author{Hans--J\"urgen Bandelt}
\address{Dept. of Mathematics, University of Hamburg, Bundesstr. 55, D-20146 Hamburg, Germany}
\email{bandelt@math.uni-hamburg.de}

\author{Victor Chepoi}
\address{Laboratoire d'Informatique Fondamentale, Universit\'e d' Aix-Marseille, Facult\'e des Sciences de Luminy, F-13288 Marseille Cedex 9, France}
\email{chepoi@lif.univ-mrs.fr}

\author{David Eppstein}
\address{Computer Science Department, University of California, Irvine, Irvine  CA 92697-3435, USA}
\email{eppstein@ics.uci.edu}

\begin{abstract}
Simple rectilinear polygons  (i.e. rectilinear polygons without holes or cutpoints)
can be regarded as finite rectangular cell complexes coordinatized by two
finite dendrons. The intrinsic $l_1$-metric
is thus inherited from the product of the two finite dendrons via
an isometric embedding. The rectangular cell complexes that share this same
embedding property are called ramified rectilinear polygons. The
links of vertices in these cell complexes may be arbitrary
bipartite graphs, in contrast to simple rectilinear polygons where
the links of points are either 4-cycles or paths of length at most
3. Ramified rectilinear polygons are particular instances of
rectangular complexes obtained from cube-free median graphs, or equivalently
simply connected rectangular complexes with triangle-free links. The underlying
graphs of finite ramified rectilinear polygons can be recognized among graphs
in linear time by a Lexicographic Breadth-First-Search.
Whereas the symmetry of a simple rectilinear polygon is very
restricted (with automorphism group being a subgroup of the dihedral
group $D_4$), ramified rectilinear polygons are universal: every finite
group is the automorphism group of some ramified rectilinear
polygon.
\end{abstract}
\maketitle

\section{Introduction}

Polygons endowed with a metric are fundamental objects in
computational  and distance geometry. Computation of the geodesic distance
between two query points of a polygon $P$ (that is, the length of the shortest
path within $P$ that connects the two points) is required in  a variety
of algorithmic problems motivated by applications in robot motion,
plant and facility layout, urban transportation, and wire layout; for a
survey, see \cite{Mi}. To answer such queries efficiently,  the
polygon $P$ may be subdivided into simple pieces such as
triangles, rectangles or trapezoids at the preprocessing stage. If $P$
is a simple polygon (bounded by a polygonal Jordan curve), then the
dual graph of this subdivision will generally be a tree $T(P)$. Given a pair of
points $s$ and $t$, the cells of the subdivision containing $s$ and $t$
may be computed using point-location methods~\cite{BCKO}. On the other hand, the
dual graph of the subdivision gives a rough idea of the global
location of $s$ and $t$ in the polygon $P$, yielding essential
information for computing the exact geodesic
distance between $s$ and $t$. In particular, if $P$ is simple, and the boundaries between cells of the subdivision are straight line segments, then
any shortest $(s,t)$-path traverses the cells of the subdivision along the unique path of $T(P)$
that connects the cells containing $s$ and $t$. Therefore the
subdivision of $P$ can be viewed as a kind of coordinatization of
the points of $P$.

A simple rectilinear polygon (also called an orthogonal polygon) is a simple polygon $P$ for which all
sides (boundary segments) are parallel to the coordinate axes; we view such polygons as being topologically closed (that is, they include their boundary segments) and endowed with
the $l_1$-metric.
For simple rectilinear polygons the idea of coordinatization by trees can
be made more precise, as we describe below, so that geodesic distances can be directly computed
from the tree-coordinates of the points. However, this more precise coordinatization involves two trees
rather than one, using a subdivision of the polygon into rectangles that does not have a single tree as its dual.

\begin{figure}[t]
\scalebox{0.8}{\includegraphics{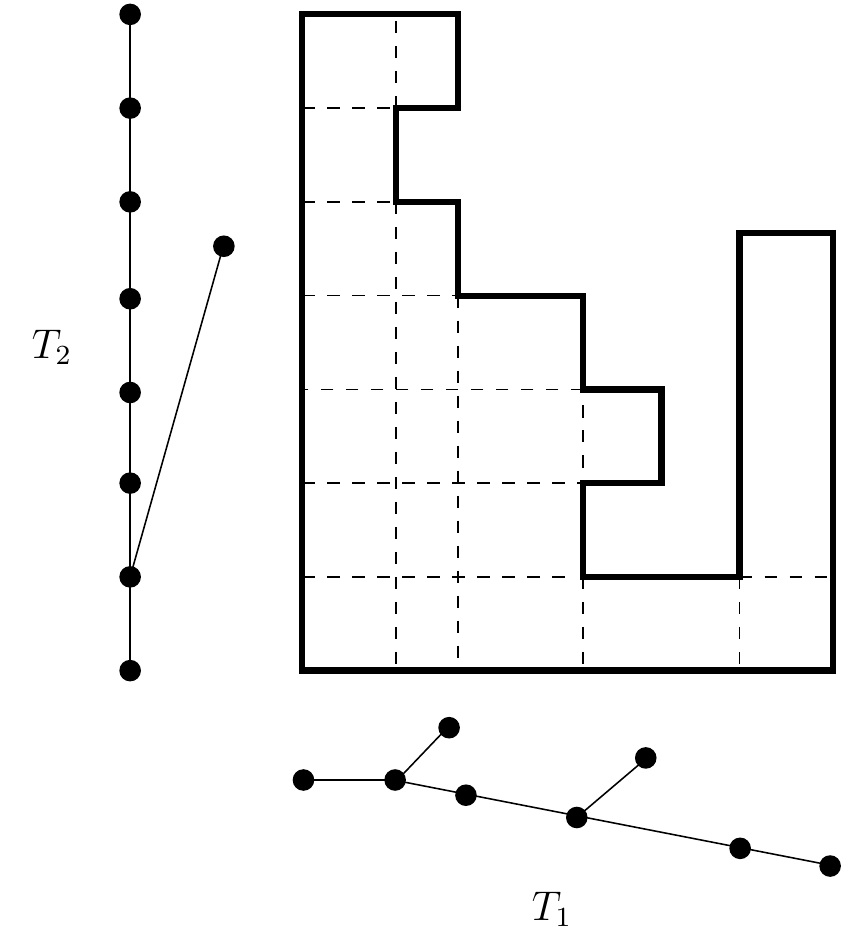}}
\caption{A simple rectilinear  polygon $P$ and
its grid lines giving rise to the coordinate tree
networks $T_1$ and $T_2.$}
\label{polygon}
\end{figure}

A \emph{corner} of $P$ is a point
on the boundary of $P$ incident with two (perpendicular) sides. We
call a horizontal or vertical axis-parallel line segment a \emph{line} of $P$ if it lies entirely in $P$ and is maximal with respect
to inclusion. A \emph{grid line} of $P$ is any line of $P$ passing
through some corner of $P$. The \emph{grid network} $N(P)$ has all
points in which horizontal and vertical grid lines intersect as its
vertices (thus including all corners); in computational geometry terminology, it is the arrangement formed by the grid lines. Two vertices $u$ and $v$ of
$N(P)$ are adjacent exactly when they are consecutive on a common
line, so that $N(P)$ forms a planar graph; the edge $uv$ of $N(P)$ is then weighted by the length of the
line segment between $u$ and $v$.
The shortest-path metric $d_N$ of the grid network $N(P)$ can be
extended to a metric $d_P$ of the entire simple polygon $P$ in the
following way. For any two distinct points $s$ and $t$ of $P$
augment the set of grid lines by the lines of $P$ that pass through $s$ or
$t$. Define the expanded grid network $N_{s,t}(P)$ with respect to
this enlarged set of distinguished lines, so that $d_P(s,t)$ is the
distance in the network $N_{s,t}(P)$. A shortest path in
$N_{s,t}(P)$ joining $s$ and $t$ can be turned into a rectilinear
path $\pi_{st}$ of $P$ having length $d_P(s,t)$
by replacing each edge of $N_{s,t}(P)$ by the corresponding axis-parallel line segment in $P$.  In
general, a \emph{rectilinear path} $\pi$ of $P$ is any polygonal
chain consisting of axis-parallel segments lying inside $P$, and its
length is the sum of lengths of the constituent segments of $P$. It
is easy to see that $d_P(s,t)$ equals the \emph{intrinsic $l_1$-distance} (or geodesic distance) of $P$ between $s$ and $t$, that is, the minimum length of
a rectilinear path connecting $s$ and $t$.

Partitioning the set of grid lines into the horizontal and the
vertical lines gives rise to two tree networks, $T_1$ and $T_2;$ see
Fig.~\ref{polygon} for an example. Namely, the vertices of $T_1$ are the
vertical grid lines, where two lines are adjacent exactly when they
support opposite edges of some 4-cycle of $N(P).$ The weight of the
respective edge of $T_1$ is then the horizontal distance between the
two vertical lines, which is the length of the perpendicular edges
of the supported 4-cycle of $N(P).$ Similarly, $T_2$ is defined in
terms of the horizontal grid lines. Every vertex $x$ of $N(P)$ thus
receives two coordinates $x_1$ and $x_2$ in $T_1$ and $T_2,$
respectively, such that the mapping $x\mapsto (x_1,x_2)$ constitutes
an isometric embedding of the network $N(P)$ into the Cartesian
product $T_1\,\Box\,T_2$ of the tree networks  $T_1$ and $T_2.$

If every edge of $T_1$ and $T_2$, respectively, is replaced by a segment
of the real line having the same length, one obtains  finite
\emph{dendrons} $D_1$ and $D_2$ (for definitions, see Section~\ref{sec:frameworks}). The points of $D_1$ are then in
one-to-one correspondence with the vertical lines of $P,$ and the
points of $D_2,$ mutatis mutandis, with the horizontal lines. The
isometric embedding of $N(P)$ into the Cartesian product $T_1\,\Box\,
T_2$ therefore lifts to an isometric embedding of $P$ into the
product space $D_1\times D_2$. After linear time
preprocessing, distance queries in trees and dendrons may be performed in
constant time using lowest common ancestor algorithms
\cite{ahu_lca,ht_lca,agkr_lca}. Since the $l_1$ geodesic distance between any two points in $P$ is just the sum of distances between the corresponding points in the two factors of the Cartesian product, this shows that distance computations in simple
rectilinear polygons are reduced to finding the coordinates of the
query points in the factors.

This coordinatization of simple rectilinear polygons motivates us to  consider
and investigate the rectilinear complexes endowed with the intrinsic
$l_1$-metric that have isometric embeddings into Cartesian
products of two dendrons. We will call such objects ``ramified
rectilinear polygons.'' A straightforward inductive argument shows that every tree with more than one node can occur
in the coordinatization of a simple rectilinear polygon.
However, not every pair of trees may occur in combination; for instance, if $T_1$ is the star $K_{1,k}$, then $T_2$ must have at least $\max(k+1,2k-2)$ nodes, whereas this constraint does not apply in the ramified case. Therefore, ramified rectilinear polygons are a strict generalization of simple rectilinear polygons.

In this paper we characterize the
ramified rectilinear polygons and their underlying networks (the networks playing the same role for these complexes as the network $N(P)$ played for the simple rectilinear polygon $P$); see Section~\ref{sec:results} for details of this characterization. The underlying networks of ramified rectilinear polygons are exactly the networks that can be  isometrically embedded
into the Cartesian product of two tree networks; our
characterization provides a simple linear time algorithm for
their recognition. Note, however, that for every $k\ge 3$ the problem of
recognizing the graphs isometrically embeddable into the Cartesian
product of $k$ trees is NP-complete \cite{BaVdV}. For a survey of
other relationships between the metric geometry of complexes and
that of graphs, see \cite{BaCh_survey}.  More generally, in a subsequent paper,
we investigate the complete
Menger-convex  metric spaces that can be isometrically embedded into
the products of two dendrons. We call such spaces ``partial double
dendrons."

\section{The discrete and geometric frameworks}
\label{sec:frameworks}

\subsection{Metric spaces} Let $(X,d)$ be a metric space.
An \emph{arc} joining two points $x,y$ of $X$ is a homeomorphic  map
$\gamma: [0,l]\rightarrow X$ such that $\gamma(0)=x$ and
$\gamma(l)=y.$  Then $(X,d)$ is called \emph{arc-connected} if any
two points of $X$ can be joined by an arc. Given $x\in X$ and $r>0,$
the open ball $\{ y\in X: d(x,y)<r\}$ of radius $r$ centered at $x$
is denoted by $B^{\circ}(x,r),$ and the corresponding closed ball
$\{ y\in X: d(x,y)\le r\}$ is denoted by $B(x,r).$  The \emph{interval} between two points $x,y$ of $X$ is the set $I(x,y)=\{ z\in
X: d(x,y)=d(x,z)+d(z,y)\}$; for example, in Euclidean spaces, the interval $I(x,y)$ is the closed line segment having $x$ and $y$ as its endpoints. The space $(X,d)$ is called \emph{Menger-convex} if for  any two distinct points $x,y\in X$ there
exists a point $z\in I(x,y)$ different from $x,y$. If, in addition to being Menger-convex, $(X,d)$ is
\hbox{(Cauchy-)complete}, it follows that for any two points $x,y$ and every $0\le
t\le 1$ there exists a point $z\in X$ such that $d(x,z)=t\cdot
d(x,y)$ and $d(z,y)=(1-t)\cdot d(x,y)$ \cite{Blu}.  A \emph{geodesic}
joining two points $x$ and $y$ from $X$ is the image of a
(continuous) map $\gamma$ from a line segment $[0,l]\subset \mathbb{R}$
to $X$ such that $\gamma(0)=x, \gamma(l)=y$ and
$d(\gamma(t),\gamma(t'))=|t-t'|$ for all $t,t'\in [0,l].$ The space
$(X,d)$ is said to be \emph{geodesic} if every pair of points $x,y\in
X$  is joined by a geodesic (which is necessarily included in
$I(x,y)$) \cite{BrHa,Pa}. Since every geodesic is an arc, a geodesic
space is arc-connected. Every complete Menger-convex metric space is
geodesic \cite{Blu,Me}.

A \emph{dendron} $D$ is a complete Menger-convex metric space in
which any two distinct points $x,y$ can be joined by a unique arc
$\gamma(x,y).$ Note that $\gamma(x,y)$ is the unique geodesic
joining the points $x,y.$ In fact, $\gamma(x,y)$ coincides with
$I(x,y).$ To see this, assume for a contradiction that there exists a point $z$ that is in $I(x,y)$
but outside $\gamma(x,y)$. Let $\gamma(x,z)$ and $\gamma(y,z)$ be the
two geodesics connecting the points $x$ and $y$ with $z.$  The
choice of $z$ in $I(x,y)$ implies that $\gamma(x,z)\cap
\gamma(y,z)=\{ z\}.$ Therefore concatenating $\gamma(x,z)$ and
$\gamma(z,y)$ will lead to an arc joining $x$ and $y$ different from
$\gamma(x,y)$, contradicting the assumption that arcs are unique.  This establishes that indeed $\gamma(x,y)=I(x,y)$. A
\emph{leaf} of a dendron $D$ is a point whose removal does not
separate any pair of points and a \emph{ramification point} is a
point whose removal creates at least three arc-connected components.
The leaves and ramifications points are called the \emph{vertices} of
$D.$ A \emph{finite dendron} \cite{Bo} is a compact dendron with a
finite number of vertices. A finite dendron $D$ may be formed from a finite tree $T$ by replacing each edge of $T$ by a line segment; the vertices of $D$ correspond to vertices of $T$, but $T$ may have additional degree-two vertices that do not correspond to vertices in $D$.

The \emph{(direct or Cartesian) product} of two metric spaces $(X_1,d_1)$ and
$(X_2,d_2)$ is the set $X_1\times X_2$ endowed with the metric
$d((x_1,x_2),(y_1,y_2))=d_1(x_1,y_1)+d_2(x_2,y_2).$ A function $f:
X\rightarrow X'$ between two metric spaces $(X,d)$ and $(X',d')$ is
an  \emph{isometric embedding} of $X$ into $X'$ when
$d'(f(x),f(y))=d(x,y)$ for any $x,y\in X.$ In this case $Y:=f(X)$ is
called an \emph{(isometric) subspace} of $X'.$ We also refer to the product
$D=D_1\times D_2$ of two dendrons $D_1,D_2$ as a
\emph{double dendron,} for short. A \emph{partial double dendron} is
then a complete Menger-convex isometric subspace of a double
dendron.

A subspace $Y$ of a metric space $(X,d)$ is \emph{gated} if for every
point $x\in X$ there exists a (unique) point $x'\in Y,$ the \emph{gate} of $x$ in $Y$,  such that $d(x,y)=d(x,x')+d(x',y)$ for all
$y\in Y$ \cite{DrSch}. The intersection of two gated subspaces is a
gated subspace again. $(X,d)$ is said to be a {\it gated amalgam} of two smaller spaces $(Y,d_Y)$ and $(Z,d_Z)$ along a common
nonempty gated subspace $Y\cap Z$ if $X=Y\cup Z$ and $d$ extends $d_Y$ and $d_Z$ with
$$d(y,z):=d_Y(y,y')+d_Z(y',z)=d_Y(y,z')+d_Z(z',z)   \text{ for all } y\in Y \text{ and } z\in Z,$$
where  $x\mapsto x'$  denotes the gate map from $Y$ as well as from $Z$ to $Y\cap Z.$ A particular instance of this kind
of amalgamation is given when $Z$ is the product of some subspace $U$ of $Y$ with the real interval $[0,\lambda]$ of length  $\lambda>0$
where the fiber  $U\times\{ 0\}$ gets identified with $U.$ The gated amalgam of $Y$ and the product $Z=U\times [0,\lambda]$ along $U=Y\cap Z$
is then referred to as the {\it gated expansion} of $Y$ by $[0,\lambda]$ along $U.$ Gated subspaces are necessarily interval-convex, where a subspace $Y$ of $X$ is called \emph{interval-convex} if $I(x,y)\subseteq Y$ for any $x,y\in Y$. The \emph{convex hull} conv$(Z)$ of $Z\subset X$ is the smallest convex
subspace containing $Z.$ A \emph{half-space} $H$ of $X$ is a convex
subspace with a convex complement. The partition $\{ H,X\setminus
H\}$ is called a \emph{convex split} of $X$.

Let $(X,d)$ be a metric space and $x,y,z\in X.$ We often use
``median" sets of the type $m(x,y,z)=I(x,y)\cap I(y,z)\cap I(z,x).$
If $m(x,y,z)$ is a singleton for all $x,y,z\in X$, then the space
$X$ is called \emph{median} \cite{VdV} and we usually refer to
$m(x,y,z)$ as to the \emph{median} of $x,y,z$ (here we do not
distinguish between the singleton and the corresponding point).
Dendrons are median spaces. Indeed, given three points $x,y,z$ of a
dendron $D$, the union  $\gamma(x,y)\cup \gamma(x,z)$ contains an
arc between $y$ and $z$ which is composed of two subarcs of
$\gamma(x,y)$ and $\gamma(x,z)$ intersecting in a single point $m.$
By uniqueness of arcs, the composed arc coincides with the geodesic
$\gamma(y,z)$ \cite{Me}. Therefore, as geodesics are intervals, $m$
is the unique median of $x,y,z.$  The product of any two median  spaces is
evidently median, and therefore products of dendrons are median. But in
general, an isometric subspace $Y$ of a median space $X$ need not be
median, unless  it is \emph{median-stable}, that is, $m(x,y,z)\in Y$
for all $x,y,z\in Y.$


A \emph{geodesic triangle} $\Delta (x_1,x_2,x_3)$ in a geodesic
metric space $(X,d)$ consists of three distinct points in $X$ (the
vertices of $\Delta$) and a geodesic  between each pair of vertices
(the sides of $\Delta$). A \emph{comparison triangle} for $\Delta
(x_1,x_2,x_3)$ is a triangle $\Delta (x'_1,x'_2,x'_3)$ in the
Euclidean plane ${\mathbb E}^2$ such that $d_{{\mathbb
E}^2}(x'_i,x'_j)=d(x_i,x_j)$ for $i,j\in \{ 1,2,3\}.$ A geodesic
metric space $(X,d)$ is defined to be a \emph{$\CAT(0)$ space} \cite{Gr}
if all geodesic triangles $\Delta (x_1,x_2,x_3)$ of $X$ satisfy the
comparison axiom of Cartan--Alexandrov--Toponogov:

\medskip\noindent
\emph{If $y$ is a point on the side of $\Delta(x_1,x_2,x_3)$ with
vertices $x_1$ and $x_2$ and $y'$ is the unique point on the line
segment $[x'_1,x'_2]$ of the comparison triangle
$\Delta(x'_1,x'_2,x'_3)$ such that $d_{{\mathbb E}^2}(x'_i,y')=
d(x_i,y)$ for $i=1,2,$ then $d(x_3,y)\le d_{{\mathbb
E}^2}(x'_3,y').$}

\medskip\noindent
This simple axiom turns out to be very powerful, because $\CAT(0)$
spaces can be characterized  in several natural ways (for
a full account of this theory consult the book \cite{BrHa}). $\CAT(0)$
spaces  play a vital role in modern combinatorial group theory,
where various versions of hyperbolicity are related to
group-theoretic properties \cite{GhHa,Gr}; many arguments in this
area have a strong metric graph-theoretic flavor. A geodesic metric
space $(X,d)$ is $\CAT(0)$ if and only if any two points of this space
can be joined by a unique geodesic. $\CAT(0)$ is also equivalent to
convexity of the function $f:[0,1]\rightarrow X$ given by
$f(t)=d(\alpha (t),\beta (t)),$ for any geodesics $\alpha$ and
$\beta$ (which is further equivalent to convexity of the
neighborhoods of convex sets). This implies that $\CAT(0)$ spaces are
contractible.

\subsection{Graphs and networks}
\label{sec:graphs-and-networks}
Connected undirected graphs $G=(V,E)$ endowed with the graph
metric $d_G$ defined by
$$d_G(x,y)=\mbox{ the length of a shortest path between } x
\mbox{ and } y$$ are basic examples of discrete metric spaces. An
\emph{induced subgraph} $G'=(V',E')$ of $G$ is a subset of vertices
with the induced edge-relation, and it is an \emph{isometric
subgraph} if the distances  between any two vertices of $V'$ in $G$
and $G'$ coincide. These notions naturally carry over to \emph{networks,} i.e. graphs with weighted edges.

A graph $G$ is a \emph{median graph} if $(V,d_G)$ is a median space~\cite{Av_ternary}.
Discrete median spaces, in general, can be regarded as median
networks: a \emph{median network} is a median graph with weighted
edges such that opposite edges in any 4-cycle have the same length \cite{Ba_condorcet}.
Typical examples of median graphs are hypercubes (weak Cartesian powers of $K_2$), trees, and
squaregraphs (i.e., plane graphs in which all inner faces are
4-cycles and all inner vertices have degrees $\ge 4$ \cite{BaChEpp_square,ChDrVa}).
Finite median graphs are exactly the graphs which are obtained from
finite hypercubes by applying successive gated amalgamations
\cite{Is,VdV_matching}. It is well known \cite{BaVdV,Mu1} that median
graphs isometrically embed into hypercubes and therefore into
Cartesian products of trees. A median graph contains an isometric
6-cycle if and only if it contains an induced cube $Q_3$, for the two remaining cube vertices are forced to exist as medians of alternating subsets of vertices from the 6-cycle. Median
graphs or networks not containing any induced cube (or cube network,
respectively) are called \emph{cube-free.}

The isometric embedding of a median graph $G$ into a (smallest)
hypercube coincides with the so-called canonical embedding, which is
determined by the Djokovi\'c-Winkler relation $\Theta$ on the edge
set of $G:$ two edges $uv$ and $wx$ are $\Theta$-related exactly
when
$$d_G(u,w)+d_G(v,x)\ne d_G(u,x)+d_G(v,w);$$ see \cite{EppFaOv,ImKl}. For a
median graph this relation is transitive and hence an equivalence
relation. It is the transitive closure of the ``opposite" relation
of edges on 4-cycles: in fact, any two $\Theta$-related edges can be
connected by a ladder (viz., the Cartesian product of a path with
$K_2$), and the block of all edges $\Theta$-related to some edge
$uv$ constitute a cutset $\Theta(uv)$ of the median graph, which
determines one factor of the canonical hypercube \cite{Mu1,Mu}. The
cutset $\Theta(uv)$ defines a convex split $\sigma(uv)=\{
W(u,v),W(v,u)\}$ of $G$ \cite{Mu,VdV}, where $W(u,v)=\{ x\in X:
d(u,x)<d(v,x)\}$ and $W(v,u)=V-W(u,v).$ Conversely, for every convex
split of a median graph $G$ there exists at least one edge $xy$ such
that $\{ W(x,y),W(y,x)\}$ is the given split. Two convex splits
$\sigma_1=\{ A_1,B_1\}$ and $\sigma_2=\{A_2,B_2\}$ of $G$ are said
to be \emph{incompatible} if all four intersections $A_1\cap A_2,
A_1\cap B_2, B_1\cap A_2,$ and $B_1\cap B_2$ are non-empty, and are
called \emph{compatible} otherwise. The \emph{incompatibility graph}
$\Inc(G)$ of $G$ has the convex splits of $G$ as vertices and  pairs
of incompatible convex splits as edges.

The convex hull conv$(X)$ of any finite set $X$ of vertices in an
infinite median graph $G$ is known to form a finite median subgraph of $G$
\cite{VdV}. Therefore $G$ is a directed union of its finite convex
subgraphs. This will be used in many arguments for transferring a
result from finite to infinite median graphs: in particular, any
property that can be expressed in terms of finitely many convex sets
or splits carries over from the finite to the infinite case.

\begin{figure}[t]
\scalebox{0.40}{\includegraphics{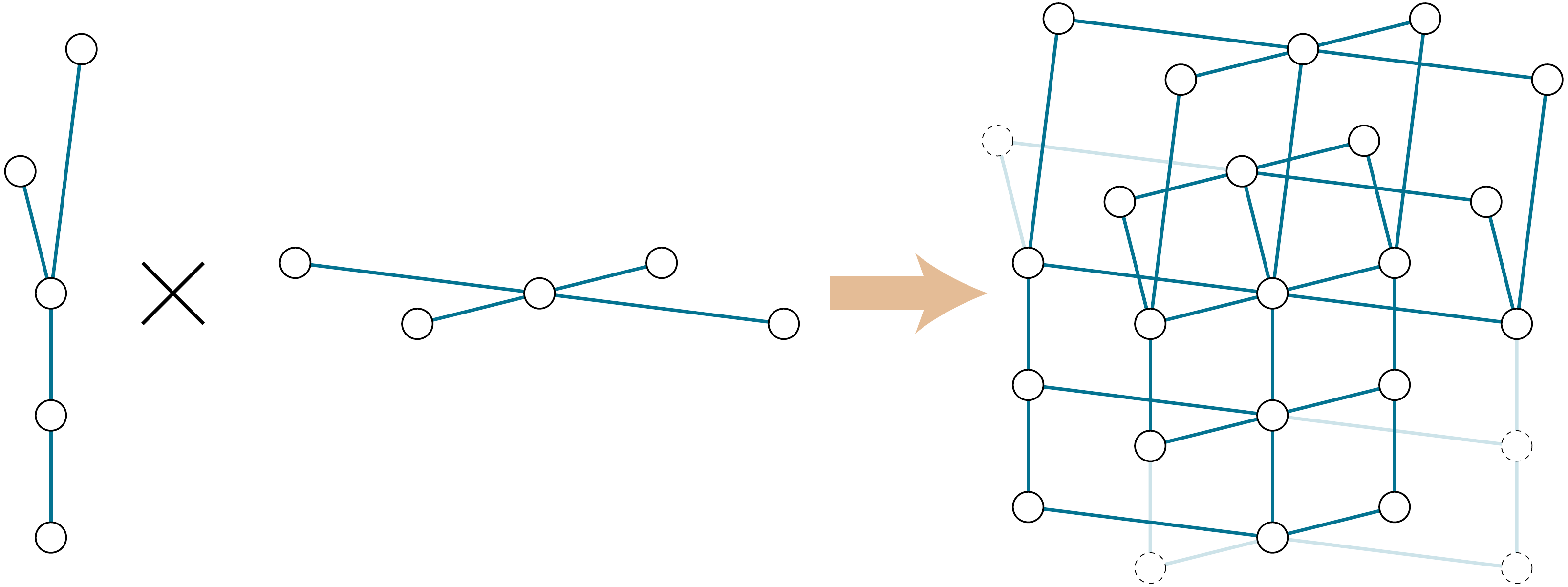}}
\caption{An isometric subgraph of the Cartesian
product of two trees}
\label{example}
\end{figure}

\subsection{Rectangular complexes and ramified rectilinear polygons}
In this subsection we recall and specify some terminology about cell
complexes; for general notions, see \cite{BrHa,VdV}.    A \emph{rectangular complex} $\mathcal K$ is a
2-dimensional cell complex $\mathcal K$ whose 2-cells are isometric to axis-parallel rectangles of
the $l_1$-plane; unlike some related work~\cite{Epp_embed} we require that the 1-cells of the complex be complete edges of these rectangles, so the nonempty intersection of any two rectangles must either be a vertex or an edge of both of them. If all 1-cells of
$\mathcal K$ have equal length, then we call $\mathcal K$ a
\emph{square complex}; in this case we may assume without loss of generality that
the squares of the complex are all unit squares. Square complexes are the 2-dimensional
instances of \emph{cubical complexes,} viz. the cell complexes (where cells have
finite dimension)  in
which every cell of dimension $k$ is isometric to the unit cube of
${\mathbb R}^k.$ It is not stipulated here that the cell complexes
have only finitely many cells, but, as in \cite{BrHa}, we require
that the complex contains only finitely many isometry types of rectangular
cells.

The 0-dimensional faces of a rectangular complex $\mathcal K$ are called its
{\it vertices}, forming the vertex set $V({\mathcal K})$ of $\mathcal K$. The {\it underlying network}
$N({\mathcal K})$ of $\mathcal K$ then represents the 1-dimensional faces by weighted {\it edges} of the same length.
Disregarding edge lengths (by adopting unit lengths) one obtains the underlying graph $G({\mathcal K}) = (V({\mathcal K}),E({\mathcal K}))$.
In the case of a simple rectilinear polygon $P,$ the cells of the associated rectangular complex ${\mathcal K}(P)$
are determined by a pair of adjacent vertical grid lines together with a pair of adjacent horizontal
grid lines. Then the underlying network of ${\mathcal K}(P)$ is the grid network $N(P)$ of $P.$ Conversely, from any graph $G$
or network $N$ one can derive a cube or box complex $|G|$ or $|N|$ by replacing all cuboids, i.e. subgraphs
(or subnetworks) of $G$ (or $N$) isomorphic to (edge-weighted) cubes of any dimensions
(vertices, edges, induced 4-cycles, etc.) by solid boxes. The complexes $|G|$ and $|N|$ are referred to as the geometric
realization of $G$ and $N,$ respectively.
In particular, ${\mathcal K}(P)=|N(P)|$ for every simple rectilinear polygon $P.$  More generally,
median networks give rise to particularly interesting cubical complexes of higher dimensions.


A cell complex $\mathcal K$ is called \emph{simply connected} if it
is connected and every continuous mapping of the 1-dimensional
sphere ${\mathbb S}^1$ into $\mathcal K$ can be extended to a
continuous mapping of the disk ${\mathbb D}^2$ with boundary
${\mathbb S}^1$ into $\mathcal K$. The \emph{link} of a vertex $x$ in
$\mathcal K$ is the graph $\Link(x)$ whose vertices are the 1-cells
containing $x$ and where two 1-cells are adjacent if and only if
they are  contained in  a common 2-cell (see \cite{BrHa} for the
notion of link in general polyhedral complexes). The \emph{link
graph} $\Link(\mathcal K)$ of $\mathcal K$ is then the union of the
graphs $\Link(x)$ for all vertices $x$ of $\mathcal K.$

A rectangular complex ${\mathcal K}$  can be endowed with several
intrinsic metrics \cite{BrHa} transforming ${\mathcal K}$ into a
complete geodesic space. Suppose that inside every 2-cell of
${\mathcal K}$ the distance is measured according to an $l_1$- , $l_2$-, or $l_{\infty}$-
metric (or, more generally, according to any
$l_p$-metric). The \emph{intrinsic} $l_1$- or $l_2$-\emph{metric}
of $\mathcal K$ is defined by assuming
that the distance between two points $x,y\in {\mathcal K}$ equals the infimum
of the lengths of the paths joining them. Here a \emph{path} in $\mathcal K$
from $x$ to $y$ is a sequence $P$ of points
$x=x_0,x_1\ldots x_{m-1}, x_m=y$ such that for each $i=0,\ldots,
m-1$ there exists a 2-cell $R_i$ containing $x_i$ and $x_{i+1};$ the
\emph{length} of $P$ is $l(P)=\sum_{i=0}^{m-1} d(x_i,x_{i+1}),$ where
$d(x_i,x_{i+1})$ is computed inside $R_i$ according to the
respective metric. A \emph{ramified rectilinear polygon} is a finite
rectangular complex $\mathcal K$  endowed with the intrinsic
$l_1$-metric which embeds isometrically into the product of two
finite dendrons.

\begin{figure}[t]
\includegraphics[scale=0.75]{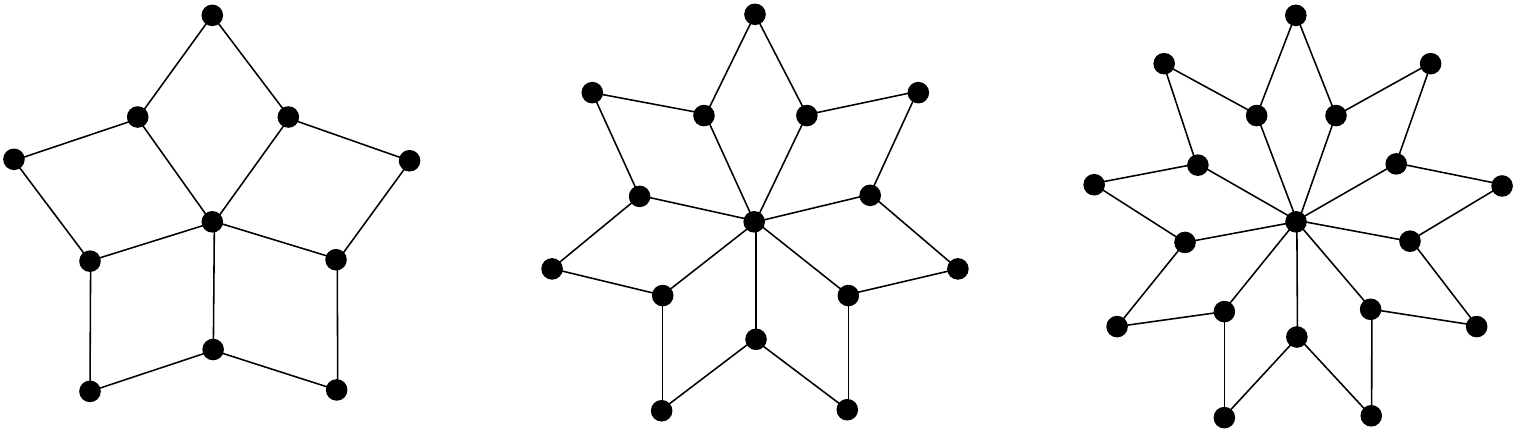}
\caption{The first three odd cogwheels}
\label{odd-wheels}
\end{figure}

\section{Main results}
\label{sec:results}
In this section we formulate  the main results of the paper; in
particular, we provide geometric and combinatorial characterizations
of ramified rectilinear polygons, along with their underlying graphs
and, more generally, arbitrary partial double dendrons.
We start with a characterization  of  graphs $G$ which are
isometrically embeddable in the Cartesian product of two trees (see
Fig.~\ref{example} for an example). We call such graphs \emph{partial double
trees.} Such graphs are bipartite and cube-free, that is,  the cube
$Q_3$ does not occur as an induced subgraph.  For a graph $F$ denote
the \emph{simplex graph} \cite{BaVdV} of $F$ by $\kappa(F)$:
it has
the simplices (or cliques) of $F$ as its vertices, and two vertices in $\kappa(F)$ are connected
by an edge when their simplices differ by the presence or absence of exactly one vertex of $F$.  The cube $Q_3$
is isomorphic to $\kappa(C_3),$ and when $F=C_n$ is a cycle with $n>3$ vertices, then
$\kappa(F)$ is called a \emph{cogwheel} \cite{KlKo} or bipartite wheel
\cite{Ba_hmg}.  A median graph is cube-free exactly when it
does not contain a subgraph isomorphic to $Q_3-v,$ the cube minus a vertex.
There exist, however, cube-free median graphs that are not partial
double trees: each odd cogwheel $\kappa(C_{2k+1})$ for $k\ge 2$ (Fig.~\ref{odd-wheels}) is
a cube-free isometric subgraph of $Q_{2k+1},$ yet the least number
$\tau(G)$ of tree factors in a Cartesian representation of the
graph $G=\kappa(C_{2k+1})$ equals 3. The graphs $\kappa(C_{2k+1})$ play a key role
in our characterization, as they signify odd cycles in the links of
vertices in the complex $|G|.$

\begin{theorem} \label{partial-double-tree} For a connected graph $G$ the
following conditions are equivalent:

\begin{itemize}
\item[(i)] $G$ is a partial double tree;
\item[(ii)] $G$ is a median subgraph of the Cartesian product of two trees;
\item[(iii)] $G$ is a median graph such that $\Inc(G)$ is bipartite;
\item[(iv)] $G$ is a median graph such that $\Link(|G|)$ is bipartite;
\item[(v)] $G$ is a median graph such that $\Link(x)$ is bipartite
for all vertices $x$ of $G;$
\item[(vi)] $G$ is a median graph that does not contain $Q_3-v$ nor any
$\kappa(C_{2k+1})$ $(k\ge 2)$ as an isometric subgraph;
\item[(vii)] $G$ is an isometric subgraph of a hypercube
such that $C_{2k+2}$ and $\kappa(C_{2k+1})$ $(k\ge 2)$ are not isometric
subgraphs of $G;$
\item[(viii)] $C_3,K_{2,3},C_{k+3},$ and $\kappa(C_{2k+1})$ $(k\ge 2)$
are not isometric subgraphs of $G$.
\end{itemize}
\end{theorem}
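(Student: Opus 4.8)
The plan is to prove the equivalence of these eight conditions by establishing a cycle of implications, exploiting the fact that median graphs are directed unions of their finite convex subgraphs so that it suffices to verify the characterization on finite median graphs and then transfer to the infinite case. Since conditions (iii)–(vi) already presuppose that $G$ is a median graph, I would first handle the implication $\text{(i)}\Rightarrow\text{(ii)}$, which requires showing that a graph isometrically embeddable in a product of two trees is automatically a median graph. This follows because median-stability is preserved under the relevant projections: the product of two trees is median, and one shows that the isometric image is median-stable by verifying that the median of any three image vertices, computed coordinatewise in each tree factor, again lands in the image. This is where I expect to use the cube-free structure, since each tree factor is itself a median (indeed cube-free) space.

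The **core of the argument** is the equivalence of the internal characterizations (iii), (iv), (v). Here I would rely on the standard dictionary between the combinatorial structure of a median graph and the geometry of its cube complex $|G|$. The link $\Link(x)$ of a vertex records which pairs of edges at $x$ span a square, and two edges at $x$ correspond to the two $\Theta$-classes (equivalently convex splits) through $x$; an edge in $\Link(x)$ says these two splits are \emph{locally} crossing at $x$. The key lemma to establish is that an odd cycle in some $\Link(x)$ corresponds precisely to an odd cycle in the incompatibility graph $\Inc(G)$, because a set of pairwise-incompatible (pairwise-crossing) splits through a common vertex assembles into a cube or, more generally, into a simplex graph $\kappa(F)$ whose structure mirrors the link. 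Thus $\Link(x)$ being bipartite for every $x$ (condition (v)) is equivalent to $\Link(|G|)$ being bipartite (condition (iv)), and both are equivalent to the global statement that $\Inc(G)$ is bipartite (condition (iii)), since crossing of convex splits is detected locally at the cells where they meet.

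The **forbidden-subgraph conditions** (vi), (vii), (viii) I would link to the previous ones by translating ``$\Inc(G)$ contains an odd cycle'' into the presence of a concrete forbidden configuration. An odd cycle of pairwise-incompatible splits of minimal length forces the existence of an isometric $\kappa(C_{2k+1})$ when $k\ge 2$, and the degenerate case $k=1$ (a triangle of mutually crossing splits) forces an induced $Q_3$, hence a $Q_3-v$; this is exactly the content flagged in the excerpt that odd cogwheels ``signify odd cycles in the links.'' For the descent from (i)/(ii) to these conditions, I would observe that neither $C_{2k+2}$ (an even cycle that is not median, i.e.\ fails to embed isometrically with the right medians) nor $\kappa(C_{2k+1})$ embeds isometrically into a product of two trees, since $\tau(\kappa(C_{2k+1}))=3$ as noted. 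Condition (viii) then drops the median hypothesis entirely and instead forbids $C_3$ (ensuring bipartiteness), $K_{2,3}$ together with all $C_{k+3}$ (a known isometric-subgraph obstruction set forcing the partial-cube/median structure in the cube-free setting), plus the cogwheels; recovering the median property from this list is, I expect, \textbf{the main obstacle}, since one must bootstrap from ``no short forbidden cycles'' to full median-stability.

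The **concluding step** closes the cycle, typically via $\text{(viii)}\Rightarrow\text{(i)}$: having recovered that $G$ is a cube-free median graph with bipartite incompatibility graph, I would two-color the vertices of $\Inc(G)$ and group the convex splits into two compatible families; each family of pairwise-compatible splits defines a tree factor (compatible splits on a median graph assemble into a tree via the standard split-system-to-tree correspondence), and the canonical embedding into the product of these two trees is isometric because the $l_1$-distance is additive over $\Theta$-classes. For the transfer to infinite $G$, each of the properties in play is expressible through finitely many convex splits, so it holds globally iff it holds on every finite convex subgraph, completing the equivalence.
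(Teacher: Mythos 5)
Your proposal has the right skeleton for the easy implications (the two-coloring of cutsets giving the two tree factors, links sitting inside $\Link(|G|)$, the reformulation of bipartite links via forbidden cogwheels), and your concluding step matches the paper's Proposition~\ref{product-n-trees}. But the core of your argument contains a genuine gap, and it is exactly at the hardest point of the theorem. You claim that an odd cycle in $\Inc(G)$ corresponds to an odd cycle in some $\Link(x)$ because ``a set of pairwise-incompatible splits through a common vertex assembles into a cube or \ldots a simplex graph'' and because ``crossing of convex splits is detected locally.'' This reasoning is wrong on two counts: the splits forming an odd cycle in $\Inc(G)$ are only \emph{consecutively} incompatible, not pairwise incompatible (indeed in $\kappa(C_{2k+1})$ itself only consecutive splits cross), and the 4-cycles witnessing the individual crossings may be located far apart in $G$, so there is no common vertex at which the odd cycle of splits could be read off from a link. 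This is precisely why the paper's implication (vi)$\Longrightarrow$(iii) requires real work: one takes a \emph{shortest} odd cycle in $\Inc(G)$, contracts all $\Theta$-classes not on that cycle, proves via Lemma~\ref{simplex_graph} that the contracted graph is the simplex graph $\kappa(C_{2k+1})$, and then undoes the contractions by Mulder's expansion procedure, using a parity count on the spokes together with the minimality of $k$ to show that a convex $\kappa(C_{2k+1})$ (or a $Q_3$ when $k=1$) survives every expansion and hence sits inside $G$. Your proposal offers no substitute for this contraction--expansion argument; it simply asserts its conclusion.

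A second, smaller gap: your proof of (i)$\Longrightarrow$(ii) is circular --- you say one shows the image is median-stable ``by verifying that the median of any three image vertices \ldots again lands in the image,'' which is just a restatement of what must be proved (and is false for isometric subgraphs of general median graphs). The paper avoids this by invoking the known characterization of cube-free median graphs by forbidden \emph{isometric} subgraphs ($C_3$, $K_{2,3}$, $C_{k+3}$): since that list is hereditary under taking isometric subgraphs, any isometric subgraph of the (cube-free, median) product of two trees is automatically median. The same citation also disposes of what you flagged as ``the main obstacle,'' namely recovering medianness in (viii); the real obstacle of the theorem is the one described above, which your proposal passes over.
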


In this theorem, the isometric embedding of a partial double dendron into the Cartesian product of two trees
is in general not unique, even when the two projections are onto (that is, the two trees are minimal).
Uniqueness of the embedding relative to permutation and automorphisms of the two trees is achieved under
2-connectivity:

\begin{corollary} \label{2-connected-partial-double-tree} For a connected graph $G$ with at least three vertices,
the following conditions are equivalent:

\begin{itemize}
\item[(i)] $G$ is a partial double tree such that $G$ has a unique isometric embedding into the Cartesian
product of two trees relative to minimality, permutations and automorphisms of the two trees;
\item[(ii)] $G$ is 2-connected median subgraph of the Cartesian product of two trees;
\item[(iii)] $G$ is a median graph such that $\Inc(G)$ is bipartite and connected;
\item[(iv)] $G$ is a median graph such that $\Link(x)$ is bipartite and connected for all vertices $x$ of $G;$
\item[(v)]  $G$ is a median graph such that $\Link(G)$ is bipartite and connected.
\end{itemize}
\end{corollary}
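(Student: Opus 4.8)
The plan is to establish Corollary~\ref{2-connected-partial-double-tree} by leveraging Theorem~\ref{partial-double-tree} and showing that the additional hypothesis of $2$-connectivity corresponds exactly to the \emph{connectedness} of the incompatibility graph (and of the links). Conditions (iii), (iv), (v) differ from the corresponding conditions in Theorem~\ref{partial-double-tree} only by demanding that $\Inc(G)$, respectively each $\Link(x)$ and $\Link(G)$, be connected in addition to being bipartite; so the bulk of the work is proving that these connectedness requirements are all equivalent to $G$ being $2$-connected, after which the equivalences among (ii)--(v) follow immediately by restricting Theorem~\ref{partial-double-tree} to the $2$-connected case. The genuinely new content is therefore the interplay between $2$-connectivity of $G$, connectedness of $\Inc(G)$, and uniqueness of the embedding in (i).

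First I would recall the structural dictionary for median graphs: the convex splits $\sigma(uv)=\{W(u,v),W(v,u)\}$ are the vertices of $\Inc(G)$, and two splits are adjacent in $\Inc(G)$ precisely when they are incompatible, which for median graphs means their defining $\Theta$-classes ``cross'' inside a common cube---equivalently, a common $4$-cycle in the cube-free case. A key observation is that a cutvertex of $G$ (or more generally a vertex whose removal disconnects $G$) yields a partition of the splits into two families that cannot be incompatible across the partition, forcing $\Inc(G)$ to be disconnected; conversely, if $\Inc(G)$ is disconnected, the two components of splits determine a nontrivial decomposition of $G$ as a gated amalgam along a single gated subspace meeting in at most one vertex, which produces a cutvertex. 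This gives the equivalence (ii)$\Leftrightarrow$(iii). For the links, I would use the fact already invoked in the excerpt that $\Link(|G|)$ records exactly the incidence of $\Theta$-classes at $4$-cycles, so connectedness of $\Inc(G)$ translates fiberwise into connectedness of each $\Link(x)$ and of $\Link(G)$; combined with the bipartiteness furnished by Theorem~\ref{partial-double-tree}, this yields (iii)$\Leftrightarrow$(iv)$\Leftrightarrow$(v).

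For the uniqueness statement (i), I would argue that a minimal embedding into $T_1\,\Box\,T_2$ amounts to a partition of the set of convex splits of $G$ into two classes $\mathcal{S}_1,\mathcal{S}_2$, each of which is pairwise compatible (so that each class assembles into a tree factor), with all \emph{incompatible} pairs of splits straddling the partition. Such a partition is exactly a proper $2$-coloring of $\Inc(G)$; since $\Inc(G)$ is bipartite by Theorem~\ref{partial-double-tree}, such colorings exist, and the partition is unique up to swapping the two classes precisely when $\Inc(G)$ is connected. Thus connectedness of $\Inc(G)$ is equivalent to the embedding being unique relative to minimality, permutation, and automorphisms of the two trees, giving (i)$\Leftrightarrow$(iii) and closing the cycle of equivalences.

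The hard part will be the bookkeeping in the (ii)$\Leftrightarrow$(iii) step: one must verify carefully that a disconnection of $\Inc(G)$ into two nonempty parts really forces a cutvertex rather than merely a split into two convex halves, which requires showing that two splits lying in different components of $\Inc(G)$ are compatible and that the whole family in one component ``nests'' coherently around a single separating vertex. The at-least-three-vertices hypothesis enters here to exclude the degenerate case $G=K_2$, whose single split gives an empty (hence trivially disconnected) incompatibility graph while $G$ is still $2$-connected in the trivial sense. I would handle this by the standard gated-amalgam decomposition of median graphs, tracking how the splits distribute across the amalgam and confirming that a separating vertex appears exactly when no incompatible pair crosses the two split-families.
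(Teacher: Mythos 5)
Your overall architecture coincides with the paper's: derive the corollary from Theorem~\ref{partial-double-tree} together with an auxiliary lemma asserting that, for a median graph with at least three vertices, 2-connectivity of $G$, connectivity of $\Inc(G)$, connectivity of every $\Link(x)$, and connectivity of $\Link(G)$ are all equivalent (this is the paper's Lemma~\ref{Inc_connected}), and then treat (i) via the correspondence between minimal embeddings into products of two trees and proper 2-colorings of $\Inc(G)$, which are unique up to swapping the color classes exactly when $\Inc(G)$ is connected; that last part is sound and matches the paper's cutpoint-swapping observation in spirit. The problem is the hard direction of the auxiliary lemma, where your proposed argument has a genuine gap: you claim that a disconnection of $\Inc(G)$ into two parts "determines a nontrivial decomposition of $G$ as a gated amalgam along a single gated subspace meeting in at most one vertex." This is false as stated. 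Take $G=P_4$ with vertices $a,b,c,d$: its three splits $s_1=\{a\}\,|\,\{b,c,d\}$, $s_2=\{a,b\}\,|\,\{c,d\}$, $s_3=\{a,b,c\}\,|\,\{d\}$ are pairwise compatible, so $\Inc(G)$ is edgeless and $\{s_2\}$ versus $\{s_1,s_3\}$ is a legitimate disconnection (indeed $\{s_2\}$ is a full component); yet no single cutvertex of $P_4$ separates the region carrying $s_2$ from those carrying both $s_1$ and $s_3$ --- the split partitions induced by the two cutvertices are $\{s_1\}\,|\,\{s_2,s_3\}$ and $\{s_1,s_2\}\,|\,\{s_3\}$. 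So a disconnection of $\Inc(G)$ does not in general "nest around a single separating vertex," and the bookkeeping you defer cannot be completed in the form you describe. The idea you are missing is the paper's direct proof that 2-connectivity implies connectivity of the links: given incident edges $xy$ and $xz$, choose a path from $y$ to $z$ avoiding $x$ with minimum distance sum to $x$; any vertex on it at distance greater than 2 from $x$ can be replaced by the median of $x$ and its two neighbors on the path, contradicting minimality, so the minimal path stays within distance 2 of $x$ and together with $x$ induces a cogfan, which yields a path from $xy$ to $xz$ in $\Link(x)$ and between the corresponding splits in $\Inc(G)$.

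Two secondary issues. First, your "fiberwise" derivation of (iv) and (v) from (iii) is an assertion, not a proof: $\Link(x)$ sits inside $\Inc(G)$ as a subgraph on the splits passing through $x$, and a subgraph of a connected graph need not be connected; here too the cogfan argument is what actually establishes connectivity of the links. Second, your remark about the three-vertex hypothesis is backwards in its details: $\Inc(K_2)$ is a single vertex, hence connected, not "empty and trivially disconnected"; the hypothesis is needed because $K_2$ satisfies conditions (iii)--(v) while failing to be 2-connected (under the convention that 2-connected graphs have at least three vertices), so the degenerate case would break the equivalence in the opposite direction from the one you describe.
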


The characterization of partial double trees provided by Theorem \ref{partial-double-tree}(v) can be used to
recognize  partial double trees in
polynomial time. As we will show below, this task can be implemented in
linear time.

\begin{theorem} \label{recognition} For a finite graph $G=(V,E),$ one can decide in linear time $O(|V|+|E|)$ whether
$G$ is a partial double tree.
\end{theorem}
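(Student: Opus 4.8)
The plan is to leverage the characterization in Theorem~\ref{partial-double-tree}(v), which reduces recognition of partial double trees to two tasks: verifying that $G$ is a median graph, and verifying that the link $\Link(x)$ is bipartite for every vertex $x$. Both subtasks are known to admit linear-time solutions, and the main work of the proof is to assemble these into a single $O(|V|+|E|)$ procedure while controlling the total size of the link graphs.

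First I would invoke the linear-time recognition of median graphs. Median graphs are sparse (a median graph on $n$ vertices has $O(n\log n)$ edges, and cube-free median graphs have $O(n)$ edges), and there is a Lexicographic Breadth-First-Search–based algorithm recognizing median graphs in linear time in the size of the input; I would cite the relevant result and note that it simultaneously produces the canonical hypercube embedding via the $\Theta$-classes. Since partial double trees are cube-free, I can first test cube-freeness (equivalently, the absence of an induced $Q_3-v$) cheaply: once the $\Theta$-classes and the convex splits are in hand, a $Q_3$ or $Q_3-v$ can be detected locally. Establishing that $G$ is a \emph{cube-free} median graph in linear time guarantees $|E| = O(|V|)$, which is what makes the subsequent link computation affordable.

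Next I would build the link graphs $\Link(x)$ explicitly. For a median graph, the 1-cells at $x$ are just the incident edges, and two such edges are adjacent in $\Link(x)$ precisely when they span a common 4-cycle (2-cell) at $x$; because $G$ is cube-free there are no higher-dimensional cells to worry about, so each 4-cycle contributes exactly one link edge at each of its four vertices. The total number of 4-cycles in a cube-free median graph is $O(|V|)$, so the aggregate size $\sum_x |\Link(x)|$ is linear and all the link graphs can be constructed together in $O(|V|)$ time by iterating over the detected 4-cycles. Finally, testing bipartiteness of each $\Link(x)$ is a routine breadth-first or depth-first two-coloring, and summed over all vertices this costs $O(\sum_x |\Link(x)|) = O(|V|)$. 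By Theorem~\ref{partial-double-tree}(v), $G$ is a partial double tree if and only if $G$ passes the median test and every $\Link(x)$ is bipartite, so the conjunction of these linear-time checks yields the claimed bound.

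The main obstacle is bookkeeping rather than a deep difficulty: I must ensure that the 4-cycles are enumerated once each and that the aggregate link size is genuinely linear, which relies crucially on cube-freeness having been verified \emph{before} the links are assembled. A general bipartite graph can appear as a link, so I cannot assume links are small individually — only their total size is controlled — and the argument must bound $\sum_x |\Link(x)|$ by the number of 2-cells, not by any per-vertex estimate. I would therefore organize the proof so that the cube-free median test is completed first, giving both the $O(|V|)$ edge bound and the enumeration of 4-cells, after which the link construction and bipartiteness tests proceed uniformly in linear time.
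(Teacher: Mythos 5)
Your proposal founders at its first step: you assume there is a linear-time (LexBFS-based) algorithm recognizing median graphs, and propose to ``cite the relevant result.'' No such result exists, and the paper itself emphasizes this point in the discussion following the statement of the theorem: recognizing cube-free median graphs is computationally equivalent to recognizing triangle-free graphs (via the simplex-graph construction), and no linear-time algorithm is known for that problem~\cite{ImrKlaMul}. The best known median-graph recognition algorithms are superlinear precisely because of this equivalence. So the decomposition ``(a) verify $G$ is a (cube-free) median graph, (b) build links and test bipartiteness'' cannot be carried out within the claimed $O(|V|+|E|)$ bound; part (a) is the whole difficulty, not a subroutine you can import. Your second phase (bounding $\sum_x |\Link(x)|$ by the number of $4$-cycles and two-coloring each link) is sound and essentially matches Step~(3) of the paper's algorithm, but it is predicated on already knowing the graph is cube-free median.

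The paper's proof is designed exactly to sidestep the median test. It never verifies medianness directly: instead it checks, from the LexBFS labels alone, a purely local condition called \emph{strong dismantlability} (labels of size at most two, $|L(y)\cap L(z)|=1$ for each two-vertex label $(y,z)$, and no two consecutive equal two-vertex labels). By Lemma~\ref{contractible}, strong dismantlability certifies that the square complex $|G|$ is contractible, hence simply connected; combined with bipartiteness of all links, Theorem~\ref{ramified-polygon}(iv)$\Longrightarrow$(v) (which rests on Theorem~\ref{cube-free}) then yields that $G$ is a partial double tree, and conversely Lemmas~\ref{lem:label-intersection} and~\ref{lem:pdd-2l} show every partial double tree passes these tests. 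In other words, the paper replaces the unverifiable hypothesis ``$G$ is median'' by the LexBFS-checkable certificate ``$|G|$ is simply connected,'' which together with bipartite links is strong enough by the structure theorems. To repair your argument you would need to supply some such substitute; as written, the proposal has a genuine gap that cannot be closed by citation.
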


The ease of recognizing partial double trees, as codified by Proposition~\ref{recognition}, stands in contrast with the difficulty of recognizing the generalization of partial double trees to cube-free median graphs. If $G$ is a triangle-free graph, then the graph formed from a graph $G$ by adding a new vertex adjacent to every vertex in $G$, and by subdividing every edge in $G$, is the simplex graph of $G$ and is a cube-free median graph. But if $G$ is not triangle-free, then the graph derived from it in this way is not a median graph. Hence, recognition of cube-free median graphs is no easier than (and is in fact equivalent in computational difficulty to) recognition of triangle-free graphs, for which no linear time algorithms are known~\cite{ImrKlaMul}.

We continue with a characterization of the rectangular complexes
that arise from cube-free median graphs. These complexes are more
general than ramified rectilinear polygons; nevertheless they share
one significant property with the latter: equipped with the
$l_1$-metric they define metric spaces that are median.

\begin{theorem} \label{cube-free} For a rectangular complex $\mathcal K$ the following conditions  are
equivalent:

\begin{itemize}
\item[(i)] the underlying graph $G({\mathcal K})$ of ${\mathcal K}$ is a cube-free median graph;
\item[(ii)] $\mathcal K$ equipped
with the intrinsic $l_1$-metric $d$ is median;
\item[(iii)] $\mathcal K$ equipped with the intrinsic $l_2$-metric is $\CAT(0)$;
\item[(iv)] $\mathcal K$ is simply connected and for every vertex
$x\in V({\mathcal K}),$ the graph $\Link(x)$ is $C_3$-free.
\end{itemize}
If any of these conditions holds for the rectangular complex ${\mathcal K},$ then the metric space ${\mathcal K}$ coincides with the geometric realization $|N({\mathcal K})|$ of its network.
\end{theorem}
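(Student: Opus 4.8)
The plan is to establish the cyclic chain of equivalences (i) $\Rightarrow$ (iii) $\Rightarrow$ (ii) $\Rightarrow$ (i), then connect (iv) separately, and finally derive the concluding identification ${\mathcal K} = |N({\mathcal K})|$. Throughout, the guiding principle is that a rectangular complex is determined combinatorially by its underlying graph $G({\mathcal K})$, and that the intrinsic metric data (edge lengths) can be normalized to unit squares for the topological and link-based arguments, since the equivalences (i), (iv) are scale-invariant and the metric statements (ii), (iii) transfer through the edge-weighting.

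First I would prove (i) $\Rightarrow$ (iv). Assuming $G({\mathcal K})$ is a cube-free median graph, I would invoke the standard fact (cited in the excerpt) that finite median graphs are built from hypercubes by successive gated amalgamations, and that median graphs embed isometrically into hypercubes; gated amalgamation preserves simple connectivity, giving the simple-connectedness of ${\mathcal K}$. For the link condition, I would show that a triangle $C_3$ in $\Link(x)$ would force three 1-cells at $x$ pairwise spanning 2-cells, which in a median complex produces the median-completion of those cells into a cube $Q_3$ (the three squares close up to a 3-cube because the missing vertices exist as medians) — precisely the configuration forbidden by cube-freeness. Conversely, for (iv) $\Rightarrow$ (i), the simple connectivity plus $C_3$-free links is exactly the hypothesis of the Chepoi/Gromov-type characterization: simply connected cube complexes with flag-no-square (here, triangle-free link) conditions are CAT(0), hence their 1-skeleta are median; I would cite Theorem~\ref{cube-free}'s own structural lemmas or the background results on simply connected complexes with triangle-free links to conclude $G({\mathcal K})$ is a cube-free median graph.

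Next, the metric equivalences. For (iii) $\Leftrightarrow$ (iv) I would rely directly on Gromov's link condition for CAT(0) cube/rectangular complexes: a finite-dimensional rectangular complex with the intrinsic $l_2$-metric is CAT(0) if and only if it is simply connected and all vertex links are ``flag'' in the appropriate sense — for rectangular (square-type) complexes this flagness reduces exactly to the links being triangle-free, so (iii) and (iv) are two phrasings of the same criterion. For (i) $\Leftrightarrow$ (ii), the median condition on $({\mathcal K},d)$ with the intrinsic $l_1$-metric: the forward direction uses that each 2-cell is an $l_1$-rectangle (itself median) and that gated amalgams of median spaces are median, building ${\mathcal K}$ as an amalgam of its cells along gated subspaces mirroring the graph-theoretic amalgamation of $G({\mathcal K})$; the reverse direction observes that if ${\mathcal K}$ is median then its vertex set with the restricted metric is a discrete median space, i.e. $G({\mathcal K})$ is a median network, and cube-freeness follows because an induced $Q_3$ in the graph would realize a solid 3-cube in ${\mathcal K}$ that cannot sit inside a $2$-dimensional complex.

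I expect the main obstacle to be the concluding identification ${\mathcal K} = |N({\mathcal K})|$, i.e. that every ``cuboid'' (4-cycle spanning a potential 2-cell) in the network is actually filled by a 2-cell of ${\mathcal K}$, so that ${\mathcal K}$ contains no missing or extra faces relative to the geometric realization of its $1$-skeleton. The subtlety is that a priori ${\mathcal K}$ could have an induced 4-cycle in $N({\mathcal K})$ whose four edges are genuine 1-cells but which does not bound a 2-cell; I would rule this out using simple connectivity together with the median/CAT(0) structure — an empty 4-cycle would be a noncontractible loop or would violate the median property by producing two distinct medians of its diagonal vertices (one ``inside,'' one forced to lie outside the unfilled square), contradicting (ii). Conversely no 2-cell of ${\mathcal K}$ can fail to appear as a cuboid of $N({\mathcal K})$ by the definition of the underlying network. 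Once every cuboid corresponds bijectively to a filled rectangular cell, the isometry ${\mathcal K} \cong |N({\mathcal K})|$ under the intrinsic $l_1$-metric follows since both spaces are built from the identical cell data with identical edge-weights and the intrinsic $l_1$-distance depends only on this cell structure.
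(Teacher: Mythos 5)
Your high-level architecture is close to the paper's: like the paper, you dispose of (iii)$\Leftrightarrow$(iv) via Gromov's link condition, get (iv)$\Rightarrow$(i) from the $\CAT(0)$/median-graph correspondence for cubical complexes, detect a $C_3$ in a link as a $Q_3-v$ that median-completes to a forbidden $Q_3$, and rule out solid cubes by 2-dimensionality. However, two of your steps have genuine gaps, and they are exactly the places where the paper works hardest.

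First, in (ii)$\Rightarrow$(i) you assert that if $({\mathcal K},d)$ is median then ``its vertex set with the restricted metric is a discrete median space.'' That assertion is the entire difficulty. Two separate facts are needed: that $G({\mathcal K})$ is an isometric subspace of $({\mathcal K},d)$ (the paper proves this by deforming any intrinsic $l_1$-geodesic between vertices into a path of the 1-skeleton of equal length, rerouting subpaths that cross two incident sides of a square through their common corner), and that the vertex set is \emph{median-stable}, i.e.\ that the median point in ${\mathcal K}$ of three vertices is again a vertex. The second fact does not follow from the first: an isometric subspace of a median space need not be median, as the paper itself stresses when it introduces median-stability. The paper closes this hole with a gate argument (cells are gated, the gate of a vertex in a cell is shown to be a vertex, hence the median of three vertices cannot be interior to a cell); your proposal contains no substitute for it.

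Second, your implications out of (i) are circular. To obtain simple connectivity of ${\mathcal K}$ in (i)$\Rightarrow$(iv) you use the hypercube-amalgamation structure of the median graph $G({\mathcal K})$, and in (i)$\Rightarrow$(ii) you assemble $({\mathcal K},d)$ as a gated amalgam of its cells ``mirroring'' that graph decomposition. Both arguments presuppose that the 2-cells of ${\mathcal K}$ are exactly those dictated by $G({\mathcal K})$, that is, that every 4-cycle of the underlying graph bounds a 2-cell, so that ${\mathcal K}=|G({\mathcal K})|$. But in your write-up this identification is the concluding assertion, and you derive it only from conditions (ii)--(iv) (noncontractible loop, failure of the median property). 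So within your own logical structure nothing carries (i) to any of (ii), (iii), (iv): you are left with (ii)$\Rightarrow$(i), (iv)$\Rightarrow$(i) and (iii)$\Leftrightarrow$(iv), and the cycle of equivalences never closes. The paper's route out of (i) is different: it quotes van de Vel's theorem that a finite graph $G$ is median if and only if $|G|$ with the intrinsic $l_1$-metric is a median space, extends this to infinite complexes by an argument you omit entirely (geodesics meet only finitely many squares, and convex hulls of finite vertex sets in median graphs are finite), reduces general rectangular complexes to square complexes by a piecewise-linear rescaling map, and obtains ${\mathcal K}=|N({\mathcal K})|$ from van de Vel's Theorem 3.13(3) rather than from a filling argument. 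Note also that the theorem is not restricted to finite ${\mathcal K}$, so your finite amalgamation inductions would need a limiting step in any case; and your claim that an unfilled 4-cycle produces ``two distinct medians'' of its diagonal vertices is not quite right --- on the resulting circle the failure is an \emph{empty} median for suitable triples of points --- though that slip is easily repaired.
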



The next result characterizes the ramified rectilinear polygons and
the partial double dendrons. As a consequence, one obtains a
characterization of all metric spaces isometrically embeddable into
double dendrons.

\begin{theorem} \label{ramified-polygon} For a finite rectangular complex $\mathcal K$ equipped
with the intrinsic $l_1$-metric $d$ the following conditions  are
equivalent:

\begin{itemize}
\item[(i)] $\mathcal K$ is a ramified rectilinear polygon;
\item[(ii)] $\mathcal K$ is median and each vertex $x\in V({\mathcal K})$ has some closed
neighborhood $B(x,\epsilon)$ (where $\epsilon>0$) that is a partial
double dendron;
\item[(iii)] $\mathcal K$ is simply connected and each vertex $x\in V({\mathcal K})$ has some closed
neighborhood $B(x,\epsilon)$ (where $\epsilon>0$) that is a partial
double dendron;
\item[(iv)] $\mathcal K$ is simply connected and for each vertex $x\in V({\mathcal K}),$
the graph $\Link(x)$ is bipartite;
\item[(v)] the underlying graph $G({\mathcal K})$ of ${\mathcal K}$ is a partial double tree and ${\mathcal K}=|G({\mathcal K})|$;
\item[(vi)] the metric space $({\mathcal K},d)$ can be obtained from a singleton space by a
finite sequence of gated expansions by real intervals along gated dendrons supporting exclusively even cogfans;
\item[(vii)] the metric space $({\mathcal K},d)$ can be obtained from finitely many rectangles
(equipped with the $l_1$-metric) via successive gated amalgamations along gated dendrons that
in each point support cogfans in either part of the same parity exclusively.
\end{itemize}

\end{theorem}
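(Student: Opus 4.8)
The plan is to use the two preceding characterizations as scaffolding—Theorem~\ref{cube-free} for rectangular complexes coming from cube-free median graphs, and Theorem~\ref{partial-double-tree} for partial double trees—and to route all seven conditions through the combinatorial statement~(v) and the local statement~(iv). I would first dispatch (iv)$\Leftrightarrow$(v). Since a bipartite graph contains no $C_3$, condition~(iv) supplies simple connectivity together with $C_3$-free links, so Theorem~\ref{cube-free} applies and yields both that $G(\mathcal{K})$ is a cube-free median graph and that $\mathcal{K}=|N(\mathcal{K})|=|G(\mathcal{K})|$. Bipartiteness of every link is then precisely item~(v) of Theorem~\ref{partial-double-tree}, so $G(\mathcal{K})$ is a partial double tree, giving~(v); the converse runs backward through the same two theorems, a partial double tree being a median graph with bipartite links whose geometric realization is simply connected.

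The geometric heart is a \emph{local lemma}: for a vertex $x$ of a rectangular complex, a sufficiently small closed ball $B(x,\epsilon)$ is a partial double dendron if and only if $\Link(x)$ is bipartite. The forward direction holds because in a product $D_1\times D_2$ the link of any point is contained in a complete bipartite graph $K_{p,q}$—the join of the local ramification-directions of the two factors—so every isometric subspace inherits bipartite links. For the converse, a bipartition of $\Link(x)$ assigns each edge-direction at $x$ to one of two axes and realizes $B(x,\epsilon)$ as an isometric subspace of a product of two stars, i.e.\ of a small double dendron. This lemma gives (iii)$\Leftrightarrow$(iv) at once, both hypotheses being simple connectivity. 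For (ii)$\Leftrightarrow$(iv) I would combine the lemma with Theorem~\ref{cube-free}: a local partial double dendron has a $C_3$-free link, so median-ness of $\mathcal{K}$ is equivalent to simple connectivity plus $C_3$-free links, and bipartiteness is then furnished by the lemma.

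To fold in~(i) I would pass between the network and the complex. Given~(v), the network $N(\mathcal{K})$ embeds isometrically into a product of tree networks $T_1\,\Box\,T_2$; replacing each tree edge by a real segment of equal length turns $T_i$ into a finite dendron $D_i$, and because $\mathcal{K}=|G(\mathcal{K})|$ the vertex embedding extends affinely across each rectangular cell to an isometric embedding of $\mathcal{K}$ into $D_1\times D_2$, exhibiting a ramified rectilinear polygon. Conversely, from~(i) the local lemma applies inside the ambient product—each small ball of $\mathcal{K}$ sits isometrically in a small ball of $D_1\times D_2$, a local double dendron—so every $B(x,\epsilon)$ is a partial double dendron; together with median-stability of a rectangular subcomplex of a double dendron (the median in $D_1\times D_2$ of three complex points is forced to lie in $\mathcal{K}$), this delivers~(ii), placing~(i) inside the hub.

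Finally the constructive conditions~(vi) and~(vii). Here I would invoke the expansion description of finite median graphs—every such graph is built from a point by successive convex (gated) expansions—and refine it metrically: a combinatorial convex expansion of a partial double tree is realized by a gated expansion of the space by a real interval along a gated convex subspace, and cube-freeness forces that subspace to be one-dimensional, hence a gated dendron. The role of the parity clause is that the new split must create no odd cogwheel $\kappa(C_{2k+1})$ in any link, which by Theorem~\ref{partial-double-tree}(vi) is exactly what preserves the partial-double-tree property; transported onto the gate dendron, ``no odd cogwheel'' becomes ``the supported cogfans are even.'' Thus (v)$\Rightarrow$(vi) by running the expansion sequence and checking the clause step by step, while (vi)$\Rightarrow$(iv) follows since even cogfans keep every link bipartite and gated expansions preserve simple connectivity; condition~(vii) is the amalgamation analogue, obtained by the standard translation between expansion and amalgamation for median structures, with the same parity condition carried to the gate dendrons. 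The main obstacle I anticipate is precisely this bookkeeping: verifying that each convex expansion realizes metrically as a gated expansion along a gated dendron, and matching the even/odd dichotomy of the cogfans supported there against the odd-cogwheel obstruction of Theorem~\ref{partial-double-tree}(vi)--(viii), so that no single expansion step can covertly insert an odd cycle into a link. The local lemma is the other delicate point, but it reduces to a direct computation with stars in a product of dendrons.
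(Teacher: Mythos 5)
Your overall architecture matches the paper's quite closely: you route everything through (iv) and (v) using Theorem~\ref{cube-free} and Theorem~\ref{partial-double-tree}, you build the embedding for (v)$\Rightarrow$(i) by turning the tree factors into dendrons with matching edge lengths, and you handle (vi)/(vii) by Mulder expansion with the even-cogfan parity bookkeeping (the paper uses a specific expansion sequence, contracting the preimage of a leaf edge of one tree factor, but your generic version works since deleting a vertex from $\Inc(G)$ preserves bipartiteness). Your ``local lemma,'' whose converse direction the paper never needs (it recovers (ii) and (iii) by going around the cycle (iv)$\Rightarrow$(v)$\Rightarrow$(i)$\Rightarrow$(ii)), is a legitimate variation.

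However, there is a genuine gap at the step that brings condition (i) back into the cycle. You write that (i) delivers (ii) ``together with median-stability of a rectangular subcomplex of a double dendron (the median in $D_1\times D_2$ of three complex points is forced to lie in $\mathcal{K}$).'' This parenthetical is precisely the hard content of the implication (i)$\Rightarrow$(ii), and it is not a citable fact: as the paper stresses, an isometric subspace of a median space need \emph{not} be median-stable (a 6-cycle in $Q_3$, or a circle in the $l_1$-plane, already fails), so the rectangular-complex structure must be used in an essential way. The paper's proof of this point is a discretization argument: given $x,y,z\in\mathcal{K}$, subdivide $D_1$ and $D_2$ so that the projections of all vertices of $\mathcal{K}$ together with $x,y,z$ become vertices of the dendrons; then the (refined) underlying network $N(\mathcal{K})$ embeds isometrically into the double tree network $N(D)$, so the underlying graph is a partial double tree, hence a median \emph{graph}, and the graph median of $x,y,z$ serves as their median in $D$ and lies in $\mathcal{K}$. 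Nothing in your proposal performs this (or any) global argument; your local lemma cannot substitute for it, since local conditions never yield medianness (or simple connectivity) without a global input --- a rectangular annulus is locally fine everywhere but is neither median nor embeddable. The same discretization is also what justifies the forward direction of your local lemma (one must know that 2-cells of $\mathcal{K}$ sit as products of arcs, equivalently that odd rectangular wheels cannot embed in a double dendron). As written, your argument establishes the equivalence of (ii)--(vii) and that each of them implies (i), but not that (i) implies any of the others.
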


We will finally show that median graphs are universal with respect to
their automorphism groups. This is not surprising in view of
Birkhoff's Theorem stating that every group is the automorphism
group of some distributive lattice (and the close relationship
between distributive lattices and median algebras~\cite{BirKis-BotAMS-47}). What might,
however, be surprising at first sight is the fact that the partial
double trees are already universal.

\begin{theorem} \label{automorphism}  Every group is the automorphism
group aut$(G)$ of a 2-connected partial double tree $G$ of radius 2. If the
group is finite, the graph can be taken to be finite. The
automorphism group of $G$ is isomorphic to the group of isometries of
the geometric realization $|G|$, which is a ramified rectilinear
polygon without cutpoints in the case that $G$ is finite.
\end{theorem}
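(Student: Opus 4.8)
The plan is to prove universality by the standard Frucht-type strategy: start from an arbitrary group $\Gamma$, represent it as the automorphism group of some combinatorial object (a graph or, more conveniently here, an edge-colored structure), and then functorially convert that object into a partial double tree in a way that preserves the automorphism group exactly. The target object must be a median graph whose links are all bipartite (so that Theorem~\ref{partial-double-tree}(v) applies), it must be $2$-connected, and it should have radius $2$. The cleanest route is to exhibit a single construction $\Gamma \mapsto G_\Gamma$ and verify $\mathrm{aut}(G_\Gamma)\cong\Gamma$ directly.

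\medskip\noindent
First I would recall Frucht's theorem in its refined form: every group $\Gamma$ is the automorphism group of some (connected) graph $H$, and for finite $\Gamma$ one may take $H$ finite. Rather than feed a plain graph into the machine, I would work with a presentation of $\Gamma$ acting freely on its Cayley graph, or equivalently pick a graph $H$ with $\mathrm{aut}(H)\cong\Gamma$ and no nontrivial automorphisms fixing the vertex set pointwise. The central idea is then a \emph{simplex-graph / subdivision} gadget: given $H$, build the graph $G_\Gamma$ by taking the vertex set of $H$ together with the edge set of $H$, adding a single new apex vertex $c$, joining $c$ to every original vertex of $H$, and subdividing every edge of $H$ once (placing the subdivision vertex adjacent to the two endpoints). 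This is exactly the construction flagged earlier in the excerpt: when $H$ is triangle-free it yields $\kappa(H)$, a cube-free median graph. To force triangle-freeness and to kill spurious automorphisms, I would subdivide each edge of $H$ twice (or attach pendant-path markers encoding edge orientation), so that the resulting gadget is bipartite and triangle-free and its automorphisms are forced to permute the ``vertex-type'' nodes according to $\mathrm{aut}(H)$ alone.

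\medskip\noindent
Next I would verify the four structural requirements in turn. \textbf{Median and partial double tree:} since the gadget is triangle-free, the simplex-graph construction makes $G_\Gamma$ a cube-free median graph; to upgrade this to a \emph{partial double tree} I must check condition (v) of Theorem~\ref{partial-double-tree}, namely that $\Link(x)$ is bipartite for every vertex. The links fall into a few orbit types (apex, original vertices, subdivision vertices), and each link is a small explicit bipartite graph, so this is a finite case check. \textbf{Radius $2$:} the apex $c$ is adjacent to all original vertices, and every subdivision vertex is within distance $2$ of $c$, giving eccentricity $2$ from $c$; one checks no vertex is closer, yielding radius exactly $2$. \textbf{$2$-connectivity:} because $c$ is adjacent to everything and the subdivided edges provide a second internally disjoint route between any two original vertices through the $H$-structure, removing any single vertex leaves $G_\Gamma$ connected; this needs $H$ connected with minimum degree $\ge 2$, which Frucht's construction can be arranged to satisfy. \textbf{Automorphism group:} the heart of the argument is $\mathrm{aut}(G_\Gamma)\cong\mathrm{aut}(H)\cong\Gamma$. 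The apex $c$ is the unique vertex adjacent to all ``type-A'' vertices and is therefore fixed by every automorphism; this pins down the partition of $V(G_\Gamma)$ into orbit types, so any automorphism restricts to an automorphism of the original $H$ on the type-A vertices and is completely determined by that restriction. Conversely every automorphism of $H$ extends uniquely. This gives the isomorphism of groups.

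\medskip\noindent
Finally, the geometric statement: the isometries of the realization $|G_\Gamma|$ (with the intrinsic $l_1$-metric) correspond bijectively to the automorphisms of the vertex-weighted graph, since in a median network the $\Theta$-classes and convex splits are combinatorially intrinsic and any isometry of $|G_\Gamma|$ must permute vertices (the vertices are the non-manifold or maximal-ramification points of the complex, hence metrically distinguished), reducing to a graph automorphism; this yields $\mathrm{Isom}(|G_\Gamma|)\cong\mathrm{aut}(G_\Gamma)\cong\Gamma$. That $|G_\Gamma|$ is a ramified rectilinear polygon follows from Theorem~\ref{ramified-polygon}(v) once we know $G_\Gamma$ is a partial double tree. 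The absence of cutpoints in the finite case is precisely the $2$-connectivity established above, transported to the complex. \emph{The main obstacle} I anticipate is pinning down $\mathrm{aut}(G_\Gamma)$ exactly: the subdivision and apex gadget can introduce unwanted symmetries (e.g. swapping the two halves of a doubly-subdivided edge, or permuting edges incident to a vertex in ways not coming from $\mathrm{aut}(H)$), and ruling these out requires carefully choosing the gadget (asymmetric edge markers or distinct subdivision lengths/colors) so that exactly the intended automorphisms survive while bipartiteness of all links is preserved.
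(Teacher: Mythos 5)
Your overall strategy (Frucht's theorem followed by a simplex-graph gadget) is the same as the paper's, but the construction as you specify it fails at a crucial point. You claim that subdividing each edge of $H$ \emph{twice} makes the gadget ``bipartite and triangle-free''. Double subdivision turns a cycle of length $k$ into a cycle of length $3k$, so it \emph{preserves} the parity of cycles: if $H$ contains an odd cycle (which a Frucht graph will in general), the doubly subdivided graph $H'$ still does, and is not bipartite. This error is fatal rather than cosmetic, because the link of the apex $c$ in $\kappa(H')$ is isomorphic to $H'$ itself --- not ``a small explicit bipartite graph'' as you assert: two spokes $cu$ and $cv$ lie in a common square of $\kappa(H')$ exactly when $uv\in E(H')$. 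Hence with your gadget $\Link(c)$ contains an odd cycle, condition (v) of Theorem~\ref{partial-double-tree} fails, and $G_\Gamma$ is \emph{not} a partial double tree. The repair is either to subdivide each edge \emph{once} (cycle lengths double, so the subdivision is bipartite and triangle-free, and with $H$ connected of minimum degree at least $3$ the automorphism group is preserved), or to do what the paper does: iterate the simplex graph, $G=\kappa(\kappa(F))$. Since $\kappa(F)$ is a median graph, hence bipartite, one gets $\Inc(\kappa(\kappa(F)))\cong\kappa(F)$ bipartite and Theorem~\ref{partial-double-tree}(iii) applies at once. Note also that your argument that the apex is fixed (``the unique vertex adjacent to all type-A vertices'') is circular as stated, since recognizing which vertices are of type A presupposes knowing the apex; the paper instead observes that all maximal hypercubes of a simplex graph intersect precisely in the empty-simplex vertex (after arranging that $F$ contains two disjoint maximal simplices, e.g.\ by adding an asymmetric tree component), so every automorphism must fix that vertex.

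The final step also has a genuine gap. Your claim that the vertices of $G$ are ``the non-manifold or maximal-ramification points of the complex, hence metrically distinguished'' is false for the degree-two vertices of $G$ (the edge-nodes of the simplex graph): each such vertex is the corner of exactly one square, so it has a manifold-with-boundary neighborhood, homeomorphic to the neighborhood of a non-vertex boundary point. These corner points \emph{are} metrically distinguishable (a quarter-plane with the $l_1$-metric is not isometric to a half-plane, e.g.\ by comparing areas of small balls), but that requires an argument, and it is not by itself enough: to reduce an isometry of $|G|$ to a graph automorphism one must reconstruct the entire cell structure of $|G|$ from its metric. The paper does exactly this, characterizing the 2-cells intrinsically as the subsets isometric to $l_1$ unit squares having at least two consecutive sides consisting of boundary points (a property that holds for the cells of a cube-free simplex graph), and alternatively recognizing the degree-two vertices as the points $p$ for which $|G|\setminus\{p\}$ is a median space and the remaining vertices by an integrality/parity condition on distances. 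Without some such reconstruction step, the asserted isomorphism $\mathrm{aut}(G)\cong\mathrm{Isom}(|G|)$ is not established; an appeal to $\Theta$-classes and convex splits being ``combinatorially intrinsic'' does not by itself tell you that an isometry carries vertices to vertices.
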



\section{Proof of Theorem \ref{partial-double-tree} and Corollary \ref{2-connected-partial-double-tree}}
\label{sec:ptpdt}

We begin with a known characterization of the cube-free median graphs (see \cite{Ba_hmg}): a connected graph is a cube-free median
graph if and only if it does not contain $C_3$, $K_{2,3}$, or $C_{k+3}$
(for $k\ge 2$) as isometric subgraphs.
Because the relation of being an isometric subgraph is transitive, this implies that an isometric subgraph of a cube-free median graph
is itself median. Because trees are median graphs and the product operation preserves median graphs, the Cartesian product $H$ of any
finite family of trees is a median graph; if the number of factors equals two, then $H$ is cube-free. Thus, the partial double trees
are isometric subgraphs of cube-free median graphs, and are themselves median, giving us the equivalence (i)$\Longleftrightarrow$(ii)
of Theorem~\ref{partial-double-tree}.

The equivalence (ii)$\Longleftrightarrow$(iii) of Theorem \ref{partial-double-tree} is a particular
case of a result of \cite{BaVdV} establishing  that a median graph
$G$ embeds into the Cartesian product of $n$ trees if and only if
$\Inc(G)$ is $n$-colorable. For each vertex $x$ of $G,$  $\Link(x)$ is an induced subgraph of $\Link(|G|),$ therefore
if $\Link(|G|)$ is bipartite, then all links of vertices are bipartite as well, inferring that (iv)$\Longrightarrow$(v).
Since a graph is bipartite exactly when it does not contain an isometric odd cycle, the condition (vi) is a reformulation of (v), whence
the equivalence (v)$\Longleftrightarrow$(vi) holds as well.

Note that every isometric
subgraph of a hypercube is bipartite and does not contain any
induced $K_{2,3}.$ By combining this fact with the characterization of cube-free median graphs by forbidden isometric subgraphs and the characterization of cube-free median graphs as the median graphs without $Q_3-v$, we see immediately that the equivalences
(vi)$\Longleftrightarrow$(vii)$\Longleftrightarrow$(viii) indeed
hold. It remains to show that the conditions (ii), (iv),
and (vi) are equivalent, as we do in the rest of this section.
At the end of the section, an example will indicate that, in contrast to (v), there does not exist a characterization of isometric subgraphs of products of three trees in terms of the links of their vertices.

Since the canonical embedding \cite{GrWi,ImKl} of a median graph $G$ into a hypercube is    governed by its transitive relation $\Theta$, which is the transitive hull of the relation   formed by opposite edges from 4-cycles, every isometric embedding of $G$ into a Cartesian product of graphs can be     expressed   in terms of (improper) edge-coloring such that opposite edges in every 4-cycle are   equally colored. With the factors of the corresponding Cartesian product being canonical images of $G$, this constitutes a subdirect representation of the associated   median algebras. For the sake of minimizing the necessary background information,
the next two lemmas provide a direct proof.

\begin{lemma}
A median graph $G$ is a subgraph of a Cartesian product of $n$ graphs $H_i$ if and only if the edges of $G$ may be (improperly) colored with $n$ colors in such a way that every two opposite edges of a 4-cycle of $G$ are assigned the same color. If the projection $\pi_i$ from $G$ to each factor $H_i$ is onto, then each factor $H_i$ must itself be a median graph, and a graph isomorphic to $H_i$ may be recovered by contracting every edge of $G$ that does not have color~$i$.
\end{lemma}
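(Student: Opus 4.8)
**The plan is to prove both directions of the equivalence, then establish the "moreover" claim about each factor being median and recoverable by contraction.**

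The forward direction is essentially the easy half. If $G$ is a subgraph of $\prod_{i=1}^n H_i$, then each edge $uv$ of $G$ connects two tuples $(u_1,\dots,u_n)$ and $(v_1,\dots,v_n)$ that differ in exactly one coordinate, say coordinate $i$ (since $uv$ is an edge of the product). I would color $uv$ with that color $i$. To check the compatibility condition, I would take a 4-cycle $u,v,w,x$ in $G$ and argue that opposite edges receive the same color: in a Cartesian product, a 4-cycle must either lie within a single factor (in which case both pairs of opposite edges change the same coordinate, but this case is excluded because the factor graph would then contain a 4-cycle that is really a digon-doubling — actually the relevant case) or it changes exactly two coordinates, say $i$ and $j$, so that the two $i$-edges are opposite and the two $j$-edges are opposite. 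The key combinatorial fact is that in a 4-cycle of a subgraph of a product, opposite edges alter the same coordinate; this follows by tracking which single coordinate each edge changes and using that returning to the start requires each changed coordinate to be reversed.

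For the converse, I would assume a valid $n$-coloring and build the embedding. Define, for each color $i$, the equivalence relation $\sim_i$ on vertices generated by "connected by an edge not of color $i$," and let $H_i$ be the quotient graph whose vertices are the $\sim_i$-classes, with two classes adjacent when some color-$i$ edge joins representatives. The embedding sends $x \mapsto ([x]_1,\dots,[x]_n)$. I would verify this is an edge-preserving injection: an edge of color $i$ changes exactly the $i$-th coordinate (its endpoints lie in different $\sim_i$-classes but the same $\sim_j$-class for $j\neq i$, precisely because the 4-cycle color condition makes $\Theta$-related edges share a color and hence the contraction well-defined). Injectivity requires that distinct vertices differ in at least one coordinate, which follows because $G$ is connected and any path between distinct vertices uses at least one edge, whose color distinguishes the endpoints' coordinates.

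The "moreover" part is where the median hypothesis does real work, and I expect the main obstacle to lie here. When the projection $\pi_i$ is onto, $H_i$ is the graph obtained by contracting all non-$i$ edges; I must show $H_i$ is itself a median graph. The natural approach is to recognize $H_i$ as $G/\!\!\sim_i$ and argue that contracting a $\Theta$-class (equivalently, all edges of colors other than $i$, which form a union of $\Theta$-classes by the coloring condition) preserves medianness. I would invoke that for a median graph the color classes correspond to unions of $\Theta$-classes, each determining a convex split, and that the quotient by such a convex congruence is again median — concretely, the gate map onto a fiber shows the contraction is a median homomorphism. The delicate point is confirming that the coloring genuinely refines into $\Theta$-classes so that the contraction respects the median operation $m(x,y,z)$; here I would lean on the fact, recalled earlier in the paper, that $\Theta$ is the transitive closure of the opposite-edges-on-4-cycles relation, so any coloring constant on 4-cycle opposite edges is constant on $\Theta$-classes, making each $H_i$ a well-defined median quotient.
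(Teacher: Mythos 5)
Your proposal is correct in outline and follows the same core strategy as the paper: color each edge by the unique coordinate it changes, observe that the coloring condition forces every $\Theta$-class (the cutset between two complementary half-spaces) to be monochromatic, and realize each factor $H_i$ as the quotient of $G$ obtained by collapsing all edges of the other colors. It diverges from the paper in two places. First, in the converse direction the paper never verifies the map $x\mapsto([x]_1,\ldots,[x]_n)$ directly; instead it factors the canonical hypercube embedding of $G$ (one $K_2$ per cutset) through the grouping of cutsets by color, so injectivity and edge-preservation are inherited from the canonical embedding rather than re-proved. Second, for the claim that each onto factor is median, the paper lifts three vertices of $H_i$ to $G$, takes their median there, and projects it back, concluding that $H_i$ is a median-closed subgraph of a hypercube and hence a median graph; you instead invoke Mulder-type contraction (quotienting a median graph by a $\Theta$-class preserves medianness) and iterate over the $\Theta$-classes of the other colors. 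Both routes are legitimate; the paper's buys a uniform treatment that also covers infinite median graphs (where your iteration over possibly infinitely many $\Theta$-classes would need a limit or directed-union argument), while yours is more self-contained if Mulder's expansion/contraction theory is taken as known.

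The one step in your write-up that does not hold as stated is injectivity: ``any path between distinct vertices uses at least one edge, whose color distinguishes the endpoints' coordinates'' is not a valid inference, since an arbitrary path may cross a monochromatic cutset and cross back, so the color of an edge on that path need not separate the endpoints' $\sim_i$-classes. The repair uses only tools you already put on the table: take a \emph{shortest} path from $x$ to $y$ and any edge $uv$ on it, say of color $i$; then $d(x,u)<d(x,v)$ and $d(y,v)<d(y,u)$, so $x\in W(u,v)$ and $y\in W(v,u)$; since the edges between these complementary half-spaces are exactly $\Theta(uv)$ and all carry color $i$, every path from $x$ to $y$ uses an $i$-colored edge, whence $[x]_i\ne[y]_i$. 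The same cutset argument is what makes precise your claim that the endpoints of an $i$-colored edge lie in different $\sim_i$-classes, which you assert but only gesture at via well-definedness of the contraction. With these two points spelled out, your proof is complete.
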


\begin{proof}
First we observe that subgraphs of products have the edge colorings described in the lemma. To see this, consider
any subgraph $G$ of the Cartesian product $H$ of
graphs $H_i$ $(i=1,\ldots,n)$, and let $\pi_i$ denote the projection of $G$ onto each of the factors;
we may assume without loss of generality that each projection $\pi_i$ is onto, for otherwise we may replace $H_i$ by $\pi_i(G)$.
Then for each pair $u,v$ of adjacent vertices of $G$ there is
a unique index $i$ such that $\pi_i$ does not collapse the edge
$uv.$ This can be visualized within $G$ by coloring the edge $uv$
with  color $i.$ If this is done for all edges, then we get an
$n$-coloring of the edges of $G$.
The coloring may be improper: adjacent edges may have the same color.
If a 4-cycle $C$ in $G_i$ is colored with a single color, then clearly it satisfies the conditions of the lemma. Otherwise, consider the set $E_C$ of edges in subgraphs $H_i$ onto which the edges of the cycle project. This set of edges must have cardinality two, for $C$ is a subgraph of the product of the edges in $E_C$, a hypercube of dimension at most four, but the only 4-cycles in a hypercube are products of two of its factors. In this case, the edges of $C$ have alternating colors, again meeting the conditions of the lemma.

Conversely, suppose that the edges of a median graph $G$ are colored in such a way that every 4-cycle is monochromatic or alternatingly colored. Let
$E_1,\ldots,E_n$ be the partition of the edges of $G$ defined by
this coloring. In a median graph $G$, as discussed in Section~\ref{sec:graphs-and-networks}, the cutsets of $G$ consisting of the edges between two
complementary half-spaces are the connected components of the ``opposite'' relation on 4-cycles, and each cutset is therefore monochromatic. In other words, this
$n$-coloring is essentially a cutset coloring in the sense of
\cite{BaVdV,BaVdV_superext}, where we do not distinguish between colors
corresponding to different edges of the same factor.
From this coloring, define $\pi_i$ to be a function from $G$ to a graph $H_i$ that contracts all edges not belonging to $E_i$. Every median graph $G$ is a subgraph of a hypercube, formed by taking the product of a number of copies of $K_2$ equal to the number of cutsets of $G$, and $\pi_i$ respects this hypercube structure, so $H_i$ is a subgraph of a hypercube with dimension $|E_i|$ and $G$ is a subgraph of the product of the graphs $H_i$. The median $m(u,v,w)$ of any three vertices in $H_i$ may be obtained by finding representatives of $u$, $v$, and $w$ in $G$ and using $\pi_i$ to project the median onto $H_i$; therefore, each $H_i$ is a median-closed subgraph of a hypercube and therefore is itself a median graph.
\end{proof}

Alternatively, for each color $i$ we may define a ``congruence'' $\psi_i$ on the vertex set
of $G:$ vertices $x$ and $y$ are congruent modulo $\psi_i$ if and
only if they are joined by a shortest path whose edges are not
colored with color $i.$ Then $H_i$ is obtained from $G$ by
identifying all vertices $x$ and $y$ that are congruent modulo
$\psi_i,$ with two different components being adjacent when
connected by an edge from $E_i.$
Motivated by the algebraic theory of median algebras we say that
$G$ has a subdirect representation in terms of the graphs $H_i\cong
G/\psi_i,$ in symbols:
$$G\hookrightarrow \Pi_{i=1}^n G/\psi_i.$$

\begin{lemma}
If a median graph $G$ is (improperly) edge-colored so that all of its cutsets are monochromatic,
then $G$ has a monochromatic cycle if and only if it has a monochromatic 4-cycle.
\end{lemma}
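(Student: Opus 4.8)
The plan is to establish the nontrivial direction: a monochromatic cycle forces a monochromatic $4$-cycle (the converse being immediate, since a $4$-cycle is a cycle). Fix a monochromatic cycle $C=v_0v_1\cdots v_{2k-1}v_0$, say in colour $c$, and let $\mathcal F$ denote the set of $\Theta$-classes (cutsets) met by its edges. Because the colouring is constant on cutsets, every class in $\mathcal F$ has colour $c$. I would reduce the whole statement to one claim: some two distinct classes $\Theta_i,\Theta_j\in\mathcal F$ are incompatible. Granting the claim, the two equally coloured incompatible splits are realized by a common $4$-cycle, a well-known feature of median graphs (see \cite{Mu,VdV}); its four edges lie in $\Theta_i\cup\Theta_j$ and are therefore all coloured $c$, giving the desired monochromatic $4$-cycle.

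To prove the claim I would pass to the canonical embedding of $G$ into the hypercube whose coordinates are the cutsets, and project onto the coordinates in $\mathcal F$ via the contraction map $\pi$ that collapses every edge whose class lies outside $\mathcal F$. Since coordinatewise medians are respected, the image $H=\pi(G)$ is a median-closed subgraph of $\{0,1\}^{\mathcal F}$, hence a median graph whose cutsets are exactly the classes of $\mathcal F$. A convenient feature of this projection is that, for any two coordinates, precisely the same pairs of $0/1$-values occur in $H$ as in $G$; consequently two classes of $\mathcal F$ are incompatible in $G$ if and only if they are incompatible in $H$. Suppose, for contradiction, that no two classes in $\mathcal F$ are incompatible. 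Then $H$ has no incompatible pair of cutsets, so no cutset of $H$ can contain two edges (two $\Theta$-related edges are joined by a ladder, which contains a $4$-cycle and hence an incompatible pair); thus every cutset of $H$ is a single edge, every edge of $H$ is a bridge, and $H$ is a tree.

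Now the edges of $C$ all lie in $\mathcal F$-classes, so none is contracted, and $C$ maps to a closed walk $W$ in the tree $H$ of the same positive length, each step being a genuine edge. In a tree the distance to a fixed root changes by exactly one at each step, so the depths along $W$ form a closed $\pm1$ sequence, which attains a local maximum at some step $s$; there $v_s$ is deeper than both its neighbours, forcing $\pi(v_{s-1})=\pi(v_{s+1})$ (the unique neighbour of $\pi(v_s)$ closer to the root). Hence the consecutive edges $v_{s-1}v_s$ and $v_sv_{s+1}$ flip the same $\mathcal F$-coordinate, i.e.\ they belong to one and the same $\Theta$-class. But these two edges share the vertex $v_s$, whereas no vertex is incident with two edges of a single $\Theta$-class, the edges of a class being pairwise disjoint as rungs of ladders. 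This contradiction proves the claim.

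I expect the projection step to be where the main difficulty lies: the argument only runs because contracting the non-$\mathcal F$ edges produces a \emph{median} graph whose cutset set is exactly $\mathcal F$ and in which incompatibility is transported faithfully in both directions, so that the tree argument can be carried out inside $H$ rather than $G$. Verifying these three points carefully is the crux. Everything else is finitary, so if one prefers to avoid infinite hypercubes one may first replace $G$ by the finite, median convex hull of $V(C)$, whose cutsets are traces of those of $G$ and which therefore inherits both a cutset-monochromatic colouring and the cycle $C$.
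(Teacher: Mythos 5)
Your proof is correct, but it reaches the crucial intermediate goal --- producing two distinct, equally coloured, incompatible convex splits among the classes met by $C$ --- by a genuinely different route than the paper. The paper argues directly and locally inside $G$: it picks a cutset $\Theta_1$ containing two edges $w_0w_1$, $w_2w_3$ of $C$, takes a shortest path $P$ joining $w_0$ and $w_3$ inside one half-space (so $P$ avoids $\Theta_1$), and observes that any cutset $\Theta_2$ meeting $P$ separates $w_0$ from $w_3$, hence must also meet $C$ (so it has colour $c$) and must be incompatible with $\Theta_1$; this needs only convexity of half-spaces. Your argument is global and structural: you contract all classes outside $\mathcal{F}$, show that pairwise compatibility of $\mathcal{F}$ would force the quotient $H$ to be a tree, and then derive a contradiction because $C$ survives as a closed walk of genuine edges, whose deepest point forces two incident edges of $G$ into one $\Theta$-class, violating the matching property of cutsets. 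This costs you extra machinery --- medianness of the quotient, the two-way transport of incompatibility along the contraction (your fiber argument, which does go through since fibers never cross an $\mathcal{F}$-class), and the matching property --- all of which are standard and are in fact invoked elsewhere in the paper, but which make your proof less self-contained than the paper's. In compensation, your quotient-to-tree picture is conceptually clean (``a cycle cannot live in a tree'') and essentially anticipates the paragraph immediately following this lemma in the paper, where the authors show that absence of monochromatic 4-cycles forces each colour-class quotient $G/\psi_i$ to be a tree. Both proofs then converge on the identical final step: in a median graph, two incompatible splits cross on a common 4-cycle (via a ladder), and since both splits here carry colour $c$, that 4-cycle is monochromatic. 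Your reduction of the infinite case to the finite median convex hull of $V(C)$ is also sound and matches the paper's general finiteness transfer principle.
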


\begin{proof}
Obviously, if $G$ has a monochromatic 4-cycle then it has a monochromatic cycle. In the other direction,
suppose that $C$ is a monochromatic cycle in $G.$
Then there is a cutset $\Theta_1$ of $G$ containing at least two
edges $w_0w_1$ and $w_2w_3$ of $C,$ where the indices are chosen so
that $w_0$ and $w_3$ are joined by a shortest path $P$ avoiding this
cutset. Now any cutset $\Theta_2$ of $G$ containing some edge  from
$P$ must also contain an edge from $C,$ thus the edges of this
cutset have the same color as the edges of $C.$ From the choice of
the cutset $\Theta_2$ we conclude that the convex splits
$\sigma_1=\{ A_1,B_1\}$ and $\sigma_2=\{ A_2,B_2\}$ defined by
$\Theta_1$ and $\Theta_2$, respectively, are incompatible. Notice
that in a median graph $G$ two such convex splits are incompatible
if and only if \emph{there exists a 4-cycle $(x_1,x_2,x_3,x_4,x_1)$
of $G$ such that $x_1\in A_1\cap A_2, x_2\in B_1\cap A_2, x_3\in
B_1\cap B_2,$ and $x_4\in A_1\cap B_2.$} Indeed, since the sets
$A_1\cap A_2$ and $B_1\cap A_2$ are convex and their union is the
convex set $A_2,$ we can find an edge $xy\in \Theta_1$ such that
$x\in A_1\cap A_2$ and $y\in B_1\cap A_2.$ Analogously, we can find
an edge $x'y'\in \Theta_1$ such that $x'\in A_1\cap B_2$ and $y'\in
B_1\cap B_2.$ Since the edges $xy$ and $x'y'$ can be connected by a
ladder and $x,y\in A_2, x',y'\in B_2,$ necessarily this ladder
contain a 4-cycle $(x_1,x_2,x_3,x_4,x_1)$ such that $x_1,x_4\in A_1,
x_2,x_3\in B_1$ and $x_1,x_2\in A_2, x_3,x_4\in B_2,$ establishing
our assertion. Since $x_1x_2,x_3x_4\in \Theta_1$ and
$x_1x_4,x_2x_3\in \Theta_2$ and all edges of the cutsets $\Theta_1$
and $\Theta_2$ have the same color, we conclude that the 4-cycle
$(x_1,x_2,x_3,x_4,x_1)$ is monochromatic.
\end{proof}

Now, if all graphs $H_i=G/\psi_i$ are trees, then there
are no monochromatic 4-cycles in $G.$ Conversely, if every 4-cycle of
$G$ is non-monochromatic  and all cutsets of $G$ are monochromatic,
then each $H_i$ must be a tree. To show this, first notice that any
pair of convex splits defined by two cutsets of  color $i$ are
compatible, or else from the previous assertion we will obtain a
monochromatic 4-cycle. Let $A_1$ be a minimum by inclusion
half-space participating in a convex split $\{ A_1,B_1\}$ whose
cutset $\Theta_1$ has color $i.$ Then necessarily all edges of the
subgraph induced by $A_1$ are not colored in color $i,$ whence $A_1$
is a pendant vertex of the graph $H_i.$ Employing the induction
hypothesis   to the coloring of the subgraph of $G$ induced by the
convex set $B_1,$ we conclude that $H_i$ is indeed a tree.

Finally,  notice that an $n$-coloring of edges of a median graph $G$
such that  the opposite edges of each 4-cycle  have the same color
and the incident edges have different colors is equivalent to the
$n$-coloring of the link graph $\Link(|G|).$ In particular, we infer
that $G$ is a median subgraph of a Cartesian product of two trees if
and only if $\Link(|G|)$ is bipartite.

We summarize the preceding observations in the following result.

\begin{proposition} \label{product-n-trees} A median graph $G$ is a median subgraph of a
Cartesian product of trees $T_1,\ldots,T_n$ if and only if $G$
admits an $n$-coloring of its edges such that  every $C_4$ in $G$ is
dichromatic with opposite edges having the same color.  The required
trees $T_i$ are then obtained from $G$ by collapsing all edges
colored with a color different from $i$. In particular, a median
graph $G$ is a median subgraph of a Cartesian product of two trees
if and only if the link graph $\Link(|G|)$  is bipartite.
\end{proposition}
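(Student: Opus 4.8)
Proposition \ref{product-n-trees} consolidates the two lemmas plus the "trees iff no monochromatic 4-cycle" observation. Let me sketch how I'd prove it.

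The statement has two parts:
1. A median graph $G$ is a median subgraph of a Cartesian product of trees $T_1, \ldots, T_n$ iff $G$ admits an $n$-coloring of edges where every $C_4$ is dichromatic (two colors) with opposite edges same color. The trees $T_i$ are obtained by collapsing edges of color $\neq i$.
2. In particular, median subgraph of product of two trees iff $\text{Link}(|G|)$ is bipartite.

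Let me think about this carefully since it's a summary of prior results.\section{Proof of Proposition \ref{product-n-trees}}

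The plan is to assemble the proposition directly from the two preceding lemmas together with the intermediate observation that each factor $H_i$ is a tree precisely when $G$ carries no monochromatic $4$-cycle. First I would establish the main biconditional. For the forward direction, suppose $G$ is a median subgraph of $T_1\,\Box\cdots\Box\,T_n$ with all projections onto. By the first lemma, coloring each edge $uv$ by the unique index $i$ for which $\pi_i$ does not collapse $uv$ yields an edge-coloring in which every $4$-cycle is monochromatic or alternatingly colored; but a monochromatic $4$-cycle would project to a closed walk of length four inside a single tree $T_i$, forcing a repeated vertex and hence collapsing an edge of the cycle under some other projection, a contradiction. Thus every $C_4$ is genuinely dichromatic with opposite edges equally colored, as required. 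For the converse, given such a coloring, the first lemma recovers median graphs $H_i$ by contracting all edges of color $\neq i$ and shows $G\hookrightarrow \Pi_i H_i$; the second lemma then guarantees that, since no monochromatic $4$-cycle exists, no monochromatic cycle exists in any single color class, so each $H_i$ is acyclic. As each $H_i$ is a connected median graph without cycles, it is a tree, and the contraction description of the $T_i$ follows immediately from the construction of the $H_i$.

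Next I would handle the specialization to $n=2$. The key observation, already recorded just above the proposition, is that an edge-coloring of $G$ in which opposite edges of every $4$-cycle share a color while incident edges of every $4$-cycle receive distinct colors is exactly the same data as a proper vertex-coloring of the link graph $\Link(|G|)$: the vertices of $\Link(|G|)$ are the edges of $G$, and two are adjacent precisely when they meet as incident sides of a common $2$-cell, i.e.\ a $4$-cycle, so "incident edges of a $4$-cycle get different colors" translates verbatim into properness of the coloring of $\Link(|G|)$. A graph admits a proper $2$-coloring iff it is bipartite, so $G$ is a median subgraph of a product of two trees iff $\Link(|G|)$ is bipartite.

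The only subtlety I would be careful about is the matching between the two descriptions of the coloring constraint. The main biconditional is phrased in terms of $4$-cycles being dichromatic (using exactly two colors, opposite edges equal), whereas the link-graph reformulation for $n=2$ additionally insists that incident edges of a $4$-cycle differ. When $n=2$ these coincide, since a dichromatic $4$-cycle with opposite edges equal automatically has its two incident edge-pairs colored by the two distinct available colors; thus no genuine gap arises, and I would note this explicitly rather than silently conflate the two phrasings. I expect no real obstacle here: the proposition is essentially a restatement and the substantive content has already been discharged by the two lemmas and the tree-recognition argument, so the proof is a short act of bookkeeping, with the translation between edge-colorings of $G$ and proper colorings of $\Link(|G|)$ being the one point deserving a careful sentence.
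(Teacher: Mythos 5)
Your forward direction and the $n=2$ translation to $\Link(|G|)$ are essentially right (and the care you take in matching ``dichromatic with opposite edges equal'' against properness of the link coloring is exactly the point the paper glosses), but the converse direction has a genuine gap at its only substantive step. You write that, since no monochromatic $4$-cycle exists, the second lemma gives no monochromatic cycle ``in any single color class, so each $H_i$ is acyclic.'' This conflates two different objects: the spanning subgraph of $G$ formed by the color-$i$ edges (to which the absence of monochromatic cycles literally applies) and the factor $H_i$, which is obtained by \emph{contracting} all edges of colors other than $i$. A cycle of $H_i$ pulls back only to a closed walk of $G$ alternating between color-$i$ edges and paths of other colors inside the contracted $\psi_i$-classes; it is not a monochromatic cycle of $G$, so the second lemma says nothing about it. Nor is the inference a general-graph fact: color a $6$-cycle so that the $\Theta$-related (antipodal) pairs $e_1,e_4$ and $e_3,e_6$ get color $1$ and $e_2,e_5$ get color $2$; then every cutset is monochromatic and every color class is a forest, yet contracting the color-$2$ edges produces a $4$-cycle. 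So ``no monochromatic cycle $\Rightarrow$ $H_i$ acyclic'' genuinely requires the median structure, and establishing it is the heart of the proposition, which your writeup treats as bookkeeping.

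The paper fills exactly this hole with a separate argument. Using the assertion \emph{inside} the proof of the second lemma (incompatible convex splits of a median graph cross on a $4$-cycle), it first shows that any two convex splits of $G$ whose cutsets both have color $i$ are compatible, since otherwise the $4$-cycle on which they cross would be monochromatic. It then argues by induction: taking an inclusion-minimal half-space $A_1$ whose cutset $\Theta_1$ has color $i$, compatibility forces every color-$i$ edge inside $A_1$ to yield a strictly smaller color-$i$ half-space, so no such edge exists, $A_1$ collapses to a pendant vertex of $H_i$, and induction applied to $B_1$ shows $H_i$ is a tree. Alternatively, you could repair your argument along the following lines: $H_i$ is itself a median graph (first lemma); a median graph with a cycle contains a $4$-cycle (otherwise every $\Theta$-class is a single edge, hence a bridge); and a $4$-cycle of $H_i$ gives two incompatible splits of $H_i$ whose preimages under the quotient map are incompatible color-$i$ splits of $G$, again producing a monochromatic $4$-cycle. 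Either way, some argument of this kind must be supplied before the proposition follows.
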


This result establishes the equivalence
(ii)$\Longleftrightarrow$(iv) and the implication
(ii)$\Longrightarrow$(vi) of Theorem \ref{partial-double-tree}. Indeed, it is easy to see
why $\kappa(C_{2k+1})$ ($k\ge 2$) cannot be embedded in the Cartesian
product of only two trees. Since monochromatic 4-cycles are
forbidden, consecutive spokes of $\kappa(C_{2k+1})$ (i.e. edges incident with the
vertex of degree $2k+1$) have to differ in color, and if only two colors were allowed, they would have to alternate in color. But, as $2k+1$
is odd, two colors do not suffice. Therefore if a median graph $G$
contains $Q_3$ or $\kappa(C_{2k+1})$ $(k\ge 2)$ as an isometric subgraph,
then $\tau (G)\ge 3.$

To complete the proof of Theorem~\ref{partial-double-tree}, it remains to establish that (vi)$\Longrightarrow$(iii). Let $G$ be
a median graph. Recall that for every convex split of $G$ there exists at least
one edge $xy$ such that $\{ W(x,y),W(y,x)\}$ is the given split.
Recall also that the half-spaces of the simplex graph $\kappa(F)$ of a graph $F$
are of the form $H_v=\{ K\in V(\kappa(F)): v\in K\}$ and $V(\kappa(F))-H_v,$ for each
$v\in V(F)$ \cite{BaVdV}. Hence two convex splits $\{
H_v,V(\kappa(F))-H_v\}$ and $\{ H_w,V(\kappa(F))-H_w\}$ are incompatible if and
only if $v$ and $w$ are adjacent, that is, $\Inc(\kappa(F))\cong F.$ As we
noticed above, two incompatible convex splits of a median graph are
associated with at least one 4-cycle $(u,v,w,x,u)$ such that $\{
W(u,x),W(x,u)\}$ and $\{ W(x,w),W(w,x)\}$ are given splits (where
$ux,xw$ are edges of the cycle). Then it is clear that the median
graph $G'$ obtained from a median graph $G$ by contracting the edges
between two complementary half-spaces $W(y,z)$ and $W(z,y)$ has
$\Inc(G)-\{ W(y,z),W(z,y)\}$ as its incompatibility graph (up to
isomorphism).

\begin{lemma} \label{simplex_graph} Let $G$ be a finite median graph. Assume that for
every triple $s_0,s_1,s_2\in V(Inc(G))$ of compatible (convex)
splits there exists an induced path from $s_1$ to $s_2$ in $\Inc(G)$
which does not contain any neighbor of $s_0$ (i.e., any convex split
incompatible with $s_0$). Then $G$ is a simplex graph such that
$G\cong Inc(\kappa(G)).$
\end{lemma}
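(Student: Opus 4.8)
The plan is to reduce the statement to the construction of a single consistent orientation of the convex splits of $G$, and then to recover the simplex structure from the Helly property enjoyed by the convex subsets of a median graph. Via the canonical embedding a vertex $x$ is determined by the side of each split on which it lies. For each split $s$ I would single out one of its two half-spaces as the \emph{positive} side $A_s$ (the other being $B_s$), aiming for the orientation to satisfy the condition \emph{(O1): $A_s\cap A_{s'}=\emptyset$ for every pair of compatible splits $s,s'$.} Granting (O1), put $S_x=\{\,s : x\in A_s\,\}$. Then every $S_x$ is a clique of $\Inc(G)$, since two positive splits meeting at $x$ cannot be compatible; and conversely every clique $K$ of $\Inc(G)$ is realized by a vertex, because the half-spaces $\{A_s: s\in K\}\cup\{B_s: s\notin K\}$ are pairwise intersecting — for a compatible pair the unique empty quadrant is the positive–positive one by (O1), while an incompatible pair has all four quadrants nonempty — so by the Helly property they share a vertex $x$ with $S_x=K$. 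As $x\mapsto S_x$ is injective and $d_G(x,y)$ counts the splits separating $x$ and $y$, this is a distance-preserving bijection of the vertex set onto the cliques of $\Inc(G)$; hence $G$ is a simplex graph, and in fact $G\cong\kappa(\Inc(G))$, as required.

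Everything therefore rests on producing an orientation satisfying (O1). The decisive observation is that (O1) leaves no freedom: for a compatible pair $s,s'$ exactly one of the four quadrants is empty, and (O1) forces the two positive sides to be precisely the sides bounding that empty quadrant. Thus each compatible pair rigidly fixes the positive side of \emph{both} its members, and a valid orientation exists if and only if these prescriptions never clash, i.e.\ there is no \emph{conflict}: two compatible partners $t,t'$ of a common split $s_0$ that fix opposite sides of $s_0$. (A split incompatible with all others lies in no compatible pair and may be oriented arbitrarily.)

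The crux, and the step I expect to be the main obstacle, is to rule out conflicts using the hypothesis. The mechanism is a short half-space identity: if $t$ and $t'$ are both compatible with $s_0$ yet fix opposite sides $A_0,B_0$ of $s_0$, then the forcing gives $A_0\subseteq\overline W$ and $B_0\subseteq\overline{W'}$ for suitable sides $W,W'$ of $t$ and $t'$; since $A_0\cup B_0$ is the whole vertex set, this yields $W\cap W'=\emptyset$, so $t$ and $t'$ must be \emph{compatible} with each other. Equivalently, \emph{incompatible} partners of $s_0$ always fix the \emph{same} side of $s_0$, and any conflicting pair $t,t'$ forms, together with $s_0$, a pairwise-compatible triple. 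At this point the hypothesis applies and delivers an induced path from $t$ to $t'$ in $\Inc(G)$ avoiding the neighbourhood $N(s_0)$ of $s_0$ — a chain of splits all compatible with $s_0$ in which consecutive splits are incompatible. Applying the identity across each step propagates the fixed side of $s_0$ unchanged from $t$ to $t'$, contradicting the assumption that the endpoints fix opposite sides. Hence no conflict occurs, a consistent orientation satisfying (O1) exists, and the routine Helly bookkeeping of the first paragraph (realizing all cliques and matching distances) completes the proof.
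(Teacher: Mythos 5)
Your proof is correct, but it takes a genuinely different route from the paper's. The paper argues by contradiction on nested half-spaces: if $H_1\subset H_0\subset H_2$ were a chain of three distinct half-spaces, the corresponding splits would be pairwise compatible, and the hypothesis yields an induced path in $\Inc(G)$ from the split of $H_1$ to that of $H_2$ avoiding all neighbors of the split of $H_0$; propagating the property ``one side is properly contained in $H_0$'' along this path gives a contradiction at the far end, where containment in $H_0\subset H_2$ would force compatibility of two splits that the path declares incompatible. Having excluded 3-chains, the paper concludes that $G$ has depth at most $2$ and invokes the cited characterization of such median graphs as simplex graphs. You avoid that external result entirely: you build a global orientation of the splits, observe that each compatible pair rigidly forces the orientation of both its members (via the unique empty quadrant), rule out conflicts using the hypothesis, and then use the Helly property of convex sets in a median graph to realize every clique of $\Inc(G)$ by a vertex, producing an explicit distance-preserving bijection $G\cong\kappa(\Inc(G))$. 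It is worth noting that your conflict-propagation step and the paper's chain-propagation step are the same combinatorial fact in different clothing: ``$t$ forces side $A_0$ of $s_0$'' is precisely ``some side of $t$ is contained in $B_0$'', and both proofs transport this sidedness along the induced path because incompatible partners of $s_0$ must lie on the same side of $s_0$. As for what each approach buys: the paper's is shorter but leans on the quoted depth-$2$ characterization of simplex graphs; yours is self-contained and constructs the isomorphism $G\cong\kappa(\Inc(G))$ explicitly (which is surely the intended reading of the lemma's conclusion ``$G\cong \Inc(\kappa(G))$''), at the cost of the orientation-and-Helly bookkeeping.
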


\begin{proof} Suppose there exists a chain $H_1\subset H_0\subset H_2$ of
three distinct half-spaces in $G.$ Let $H'_i$ $(i=1,\ldots, k)$ be a
sequence of half-spaces such that $H'_1=H_1, H'_k=H_2,$ and $\{
H'_i,V(G)-H_i\}$ is incompatible with  $\{ H'_{i+1},V(G)-H'_{i+1}\}$
and compatible (but distinct) with  $\{ H_0,V(G)-H_0\} (i=1,\ldots,
k-1).$ Then, necessarily, either $H'_2$ or $V(G)-H'_2$ is properly
contained in $H_0$ because the corresponding split is compatible
with $\{ H_0,V(G)-H_0\}$ but incompatible with $\{ H_1,V(G)-H_1\}.$
Continuing this way along the given induced path, we eventually
conclude that either $H'_{k-1}$ or $V(G)-H'_{k-1}$ is properly
included in $H_0.$ But then this set is also contained in
$H_2=H'_k,$ contrary to the incompatibility of the two splits
corresponding to $H'_{k-1}$ and $H_2.$ Therefore all chains of
half-spaces of $G$ have at most two members. This means that the
depth (sensu  \cite{BaVdV_superext}) of $G$ is at most 2, that is,  $G$ is a
simplex-graph, i.e., $G\cong \kappa(F)$ and $\Inc(G)\cong$ $\Inc(\kappa(F))\cong F,$
as required. This completes the proof of the lemma.
\end{proof}

Now we are in position to conclude the proof of the implication
(vi)$\Longrightarrow$(iii), or equivalently, its contrapositive: a median graph that does not satisfy (iii)
also does not satisfy (vi). Thus, assume that $G$ is a median graph the incompatibility graph of which contains an odd cycle (failing to satisfy (iii)) and let
$C_{2k+1}$ ($k\ge 1$) be a shortest odd cycle in the incompatibility
graph $\Inc(G)$ of the convex splits of $G.$ We assert that either $G$ is not cube-free, or $G$
contains a convex subgraph isomorphic to $\kappa(C_{2k+1})$, in either case failing (vi). Since the convex hull of the
4-cycles in which the incompatible splits of $C_{2k+1}$ cross
is finite, we can assume without loss of generality that the
median graph $G$ itself is finite.

Contract successively the edges between complementary half-spaces
not belonging to the given $C_{2k+1}.$ This median graph is then
isomorphic to $\kappa(C_{2k+1})$ by virtue of the Lemma
\ref{simplex_graph}. Now we reverse the procedure by adding all
removed convex splits back one by one, thus performing \emph{Mulder's
expansion procedure} \cite{Mu1,Mu}. In the case that $k=1$ the successive contractions produce a cube
$Q_3\cong \kappa(C_3),$ and in each expansion we retain a copy of this
cube. Trivially, a cube subgraph is always convex in a median graph.
In the remaining cases, $k\ge 2$; suppose that we have a convex subgraph $\kappa(C_{2k+1})$
at a certain step, as we will be guaranteed to have after all of the contraction steps. Expanding the corresponding larger graph (where
$\kappa(C_{2k+1})$ lives in)  induces a convex expansion of $\kappa(C_{2k+1}),$
possibly a trivial one. We cannot expand along a convex set which
contains some $C_4$ of $\kappa(C_{2k+1}),$ for otherwise, we would get a
cube. Therefore the only nontrivial case is attained when we expand
only two non-subsequent spokes of $\kappa(C_{2k+1}).$ The center $x$ of
$\kappa(C_{2k+1})$ then gets copied: $x'$ and $x''$ are then centers of two
simplex graphs of cycles. The number of spokes incident with $x'$
plus the number of spokes incident with $x''$ equals $(2k+1)+4$ and
hence is again an odd number. So one of the simplex graphs is of the
form $\kappa(C_l)$ with $l$ odd number and $l\le 2k+1.$ By minimality of
$k,$ we therefore obtain a convex $\kappa(C_{2k+1})$ again in the expanded
graph. Thus, $G$ itself must contain either a cube (for $k=1$) or a convex $\kappa(C_{2k+1})$ (in the remaining cases),
and it fails to satisfy property (vi). This completes the implication from the failure of (iii) to the
failure of (vi) and completes the proof of Theorem \ref{partial-double-tree}.

To derive Corollary \ref{2-connected-partial-double-tree} from Theorem \ref{partial-double-tree}, we establish an auxiliary result
which characterizes 2-connectedness for arbitrary median graphs. This immediately yields Corollary \ref{2-connected-partial-double-tree}, observing that
any cutpoint $x$ of $G$ would allow to switch the roles of the two trees in exactly one of the composite subgraphs amalgamated at $x$ in thus obtain an
essentially different embedding.

\begin{lemma} \label{Inc_connected} For a median graph $G$ with at least three vertices, the following conditions are equivalent:
\begin{itemize}
\item[(i)]  $G$ is 2-connected;
\item[(ii)] $\Inc(G)$ is connected;
\item[(iii)] $\Link(x)$ is connected for all vertices $x$ of $G;$
\item[(iv)] $\Link(G)$ is connected.
\end{itemize}
\end{lemma}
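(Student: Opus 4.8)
The plan is to prove the cyclic chain (i)$\Rightarrow$(iii)$\Rightarrow$(iv)$\Rightarrow$(ii)$\Rightarrow$(i), which establishes all four equivalences at once. The backbone of the argument is a local reinterpretation of links inside $\Inc(G)$. Since a median graph is bipartite, two neighbors $a,b$ of a vertex $x$ satisfy $d(a,b)=2$ with $x\in I(a,b)$, and by $K_{2,3}$-freeness they have at most two common neighbors; hence $xa$ and $xb$ span a common square if and only if $a,b$ have a common neighbor other than $x$. I would show this happens exactly when $W(a,x)\cap W(b,x)\neq\emptyset$: for the nontrivial direction, pick $w$ in this intersection and set $c=m(a,b,w)$; since $x\notin I(a,w)$ while $x\in I(a,b)$, the median $c$ is a common neighbor of $a,b$ distinct from $x$. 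As the three quadrants $W(x,a)\cap W(x,b)$, $W(a,x)\cap W(x,b)$, $W(x,a)\cap W(b,x)$ always contain $x,a,b$ respectively, the splits $\sigma_{xa},\sigma_{xb}$ are incompatible precisely when $W(a,x)\cap W(b,x)\neq\emptyset$. Thus $\Link(x)$ is isomorphic to the subgraph of $\Inc(G)$ induced by the splits whose cutset meets $x$, and the edge relation of $\Link(x)$ \emph{is} incompatibility.

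The two middle implications are then short. For (iii)$\Rightarrow$(iv), note that $\Link(G)=\bigcup_x\Link(x)$, that for an edge $xy$ of $G$ the vertex ``$xy$'' lies in both $\Link(x)$ and $\Link(y)$, and that $G$ is connected; a union of connected subgraphs chained along the edges of a connected graph is connected. For (iv)$\Rightarrow$(ii), I would consider the map sending each edge of $G$ to its convex split. If two edges are adjacent in $\Link(G)$ they are perpendicular sides of a common square, hence lie in distinct, incompatible $\Theta$-classes, so this map is a graph homomorphism onto $\Inc(G)$ that is surjective on vertices (every split is realized by an edge). Connectivity therefore pushes forward, giving $\Inc(G)$ connected.

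For (ii)$\Rightarrow$(i) I would argue the contrapositive. If $x$ is a cutpoint then $G$ is a gated amalgam $G=Y\cup Z$ with $Y\cap Z=\{x\}$ and no edges between $Y\setminus\{x\}$ and $Z\setminus\{x\}$; since $|V(G)|\ge 3$, both $Y$ and $Z$ carry at least one edge. Each $\Theta$-class lies entirely in $Y$ or entirely in $Z$, because its two sides are separated by the cutset and the gate of every $z\in Z$ in $Y$ is $x$, forcing $Z$ into a single side of any $Y$-split. For a $Y$-split $\{W(p,q),W(q,p)\}$ and a $Z$-split $\{W(r,s),W(s,r)\}$, the far halfspaces satisfy $W(q,p)\subseteq Y\setminus\{x\}$ and $W(s,r)\subseteq Z\setminus\{x\}$, so their intersection is empty and the two splits are compatible. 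Hence no edge of $\Inc(G)$ joins a $Y$-split to a $Z$-split, and $\Inc(G)$ is disconnected.

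The crux is (i)$\Rightarrow$(iii), which I would also prove contrapositively and which I expect to be the main obstacle. Suppose $\Link(x)$ is disconnected, giving a partition of the neighbors of $x$ into nonempty classes $A$ and $B$ with $W(a,x)\cap W(b,x)=\emptyset$ for all $a\in A$, $b\in B$. The key consequence is that no vertex $v\neq x$ can have shortest $x$--$v$ paths beginning with both an $A$-neighbor and a $B$-neighbor, since such $a,b\in I(x,v)$ would place $v\in W(a,x)\cap W(b,x)$. I would therefore partition $V\setminus\{x\}$ according to whether the first-step neighbors lie in $A$ or in $B$, and show this has no crossing edges: for a hypothetical edge $uw$ with, say, $d(x,w)=d(x,u)+1$, one has $u\in I(x,w)$, so a shortest $x$--$u$ path (starting in $A$) extended by $uw$ is a shortest $x$--$w$ path, contradicting that $w$'s first steps all lie in $B$. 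This exhibits $x$ as a cutpoint, so $G$ fails to be $2$-connected. The finitely-many-splits transfer principle recorded earlier lets me reduce to finite $G$ where convenient, though the halfspace arguments above apply verbatim in the infinite case. Assembling the four implications completes the proof.
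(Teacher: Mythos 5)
Your proof is correct, and it takes a genuinely different route from the paper's. The paper argues in two blocks: if $G$ has a cutpoint $x$, then $G$ is an amalgam of two nontrivial median graphs at $x$, so $\Inc(G)$, $\Link(x)$, and $\Link(G)$ are disjoint unions and hence all disconnected (this block coincides, in substance, with your (ii)$\Rightarrow$(i)); conversely, if $G$ is 2-connected, then for incident edges $xy$ and $xz$ the paper picks a $y$--$z$ path avoiding $x$ with minimum total distance to $x$ and uses a median-smoothing step --- replacing any path vertex at distance greater than $2$ from $x$ by the median of $x$ and that vertex's two path-neighbors --- to force the whole path within distance $2$ of $x$, so that the path together with $x$ induces a cogfan; this cogfan simultaneously certifies paths in $\Link(x)$ and between the corresponding splits in $\Inc(G)$, and non-incident edges are treated by induction on distance. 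You instead close the cycle (i)$\Rightarrow$(iii)$\Rightarrow$(iv)$\Rightarrow$(ii)$\Rightarrow$(i), and your engine is a dictionary that the paper uses only implicitly: $xa$ and $xb$ lie on a common square iff $W(a,x)\cap W(b,x)\neq\emptyset$ iff $\sigma_{xa}$ and $\sigma_{xb}$ are incompatible, which identifies $\Link(x)$ with the subgraph of $\Inc(G)$ induced by the splits through $x$. Your crux (i)$\Rightarrow$(iii) then replaces the paper's extremal-path argument by a contrapositive separation argument: a disconnected link induces the first-step partition of $V\setminus\{x\}$, crossing edges are excluded by the bipartite distance computation, and $x$ is exposed as a cutpoint. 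The paper's route buys the cogfan structure itself (which reappears in conditions (vi)--(vii) of Theorem~\ref{ramified-polygon} and in its proof) and delivers all three connectivity statements in one stroke from 2-connectivity; your route buys a purely local halfspace argument that needs no extremal or smoothing step, applies verbatim to infinite median graphs, and isolates the link--incompatibility isomorphism as a reusable fact --- at the modest cost that single implications such as (i)$\Rightarrow$(ii) are obtained only by composing around the cycle, and that you must invoke the standard fact (recalled in Section~\ref{sec:graphs-and-networks} of the paper) that every convex split of a median graph is realized by an edge.
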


\begin{proof}  If $G$ is not 2-connected, it is the amalgam of two nontrivial median graphs $G_1$ and $G_2$ along some cutpoint $x.$ Then $\Inc(G),$ $\Link(x),$ and $\Link(G)$
are each the disjoint union of the corresponding graphs of $G_1$ and $G_2$ and hence disconnected.

If $G$ is 2-connected, then for any incident edges $xy$ and $xz$ of $G$ there exists a path joining $y$ and $z$ that avoids $x.$ Choose such a path $P$ with smallest distance sum to $x.$
If the maximum distance to $x$ of vertices on $P$ is larger than 2, then choose any such vertex $v$  and replace it by the median $w$ of $x$ and the two neighbors of $v$ on $P.$ Since $v$ and $w$ are at distance 2, one thus obtains a new path still avoiding $x$ but with smaller distance sum to $x$ than $P$, contrary to the choice of $P$. Therefore all vertices on $P$ have distances 1 or 2 to $x$, so that $P$ together with $x$ induces a cogfan in $G.$ This cogfan testifies to a path between $xy$ and $xz$ in $\Link(x)$ and to one between the corresponding convex splits in $\Inc(G).$ Considering two non-incident edges $wy$ and $xz$, paths connecting them in $\Link(G)$ and $\Inc(G)$ are provided by virtue of a straightforward induction on the minimal distance, say $d(w,x),$ between the endpoints of $wy$ and $xz$ in $G.$
\end{proof}

 In Fig.~\ref{critical} we present an isometric subgraph $H_n$ of the Cartesian product of four trees
that is critical in the sense that taking any proper isometric subgraph would   result in a graph
which needs only three trees for an embedding into a Cartesian  product of trees. Moreover,
collapsing any block of $\Theta$ yields either $H_{n-1}$ (as long as $n>0$) or a graph embeddable
into a Cartesian product of three trees. The median graph $G$   depicted in Fig.~\ref{critical-median}
is the median graph generated by the graph $H_1$ of Fig.~\ref{critical} and therefore
cannot be embedded into the product of three trees, although every  subgraph of $G$ that is a simplex graph needs
no more than three trees for an embedding.
Thus, this example shows that, in contrast to Theorem~\ref{partial-double-tree}, 3-colorability of local neighborhoods does not suffice to characterize isometric subgraphs of Cartesian products of three trees.

\begin{figure}[t]
\scalebox{0.50}{\includegraphics{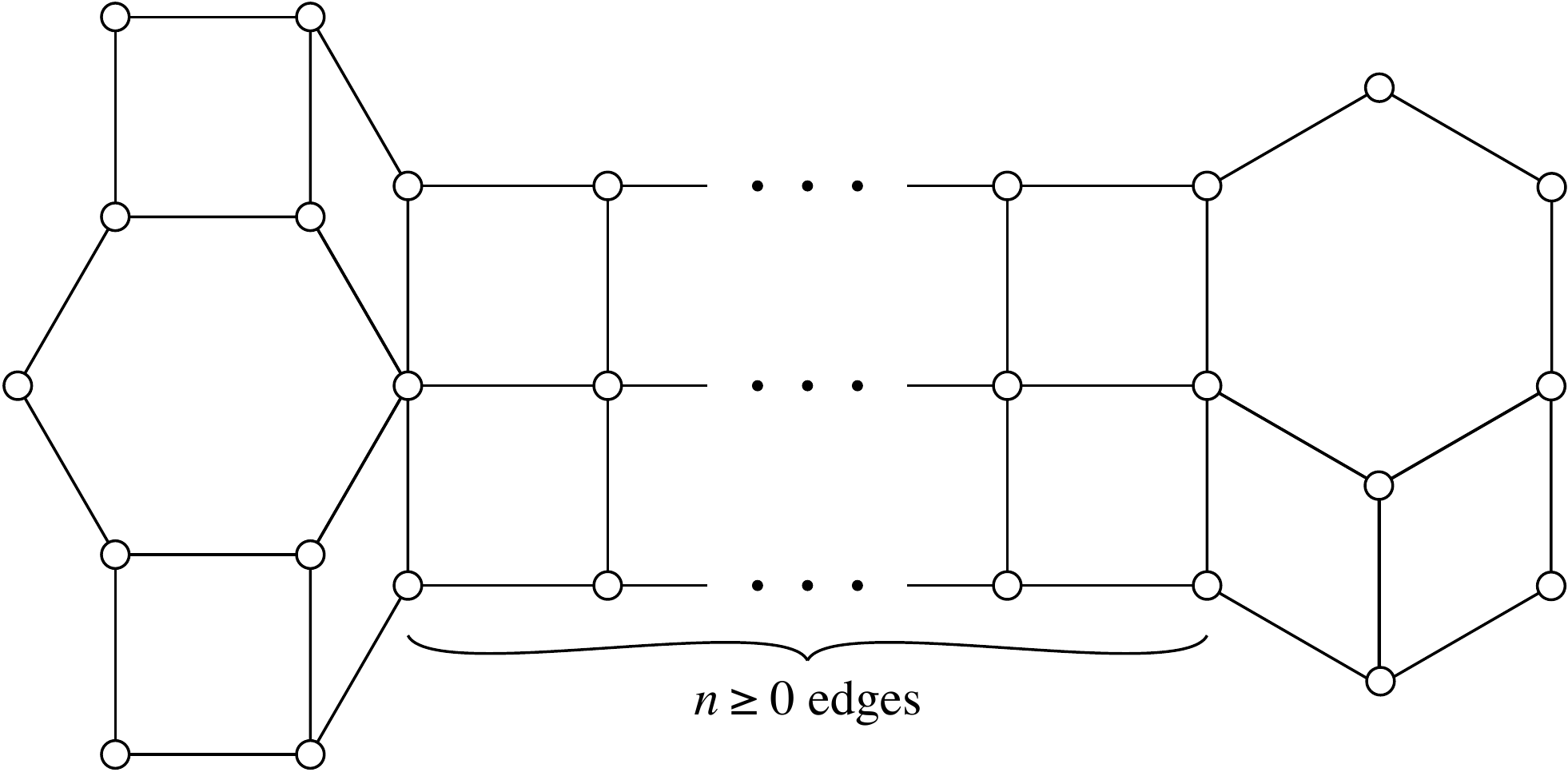}}
\caption{A critical isometric subgraph $H_n$ of the
Cartesian product of four trees}
\label{critical}
\end{figure}

\begin{figure}[t]
\scalebox{0.90}{\includegraphics{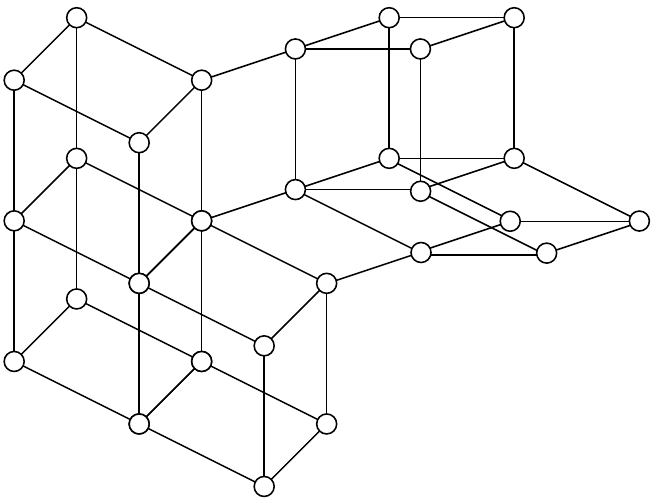}}
\caption{The median graph $G$ generated by the graph $H_1$ of Fig.~\ref{critical}}
\label{critical-median}
\end{figure}

\section{Proof of Theorem \ref{recognition}}

In  \cite{BaChEpp_square} we described how to recognize squaregraphs
in linear time by an algorithm based on breadth-first search. Now we
will show that partial double trees also can be recognized in linear
time by an algorithm based on lexicographic breadth-first search.

\emph{Breadth-First Search} (BFS) is one of the simplest algorithmic
ways to order the vertices of a connected graph $G$. In BFS, the vertices of the given graph  are ordered by
their distances from a given base point $b$. The algorithm begins by
placing vertex~$b$ into an otherwise empty queue. It then repeatedly
removes the first vertex $v$ from the queue, and
adds to the end of the queue any neighbor $w$ of $v$ that has not
already been added to the queue. The neighbor $w$ is referred to as
a son of $v$, and conversely $v$ is their father, written in short
as $f(w)=v$. When the queue becomes empty,
the parent-child relationship between the vertices of
$G$ forms a rooted spanning tree $T$ of $G$ with root $b$. Each
vertex is inserted and removed once from the queue, and each edge is
examined only when one of its endpoints is removed from the queue,
so the total time for this procedure is linear in the worst case.

\emph{Lexicographic Breadth-First Search} (LexBFS), proposed by
Rose, Tarjan, and Lueker \cite{RoTaLu,Cor-GMiCS-04}, is an
alternative procedure for traversing the vertices of a connected
graph $G$ in a more specific way: every LexBFS ordering is a BFS
ordering, but not necessarily vice-versa. In LexBFS, the queue of
vertices is replaced by a queue of sets of vertices, initially
containing two sets: the first set in the queue is $\{b\}$ and the
second contains all other vertices. In each iteration, the algorithm
removes an arbitrarily chosen vertex $v$ from the first set in this
queue, and removes the set itself if it becomes empty. When $v$ is
removed, the LexBFS algorithm partitions each
remaining set $S$ in the queue into two smaller sets, $S\cap N(v)$
and $S\setminus N(v)$; if both of these sets are nonempty, $S$ is
removed from the queue and these two sets replace it in the same
position, with $S\cap N(v)$ placed earlier in the queue than
$S\setminus N(v)$. With some care in the data structures used to
represent the queue of sets and to partition the sets, LexBFS can be
implemented so that, as with BFS, it takes linear time in the worst
case. If one numbers each vertex in the order it is removed, and defines the {\it label} $L(x)$ of a vertex $x$ to be
the list of its numbered neighbors (already removed from the queue), sorted from smallest to largest,
then LexBFS has the property that the sequence of vertex labels it
produces is lexicographically sorted so that if $L(y)$ is a proper
prefix of $L(x),$ then $x$ is labeled before $y.$ Note that this sorting
would become the usual (dictionary) lexicographic ordering if we inverted the
numbering by starting from $n$ rather than 1. The parent $f(x)$
of a vertex $x$ is the first vertex in $L(x)$; the vertex with empty
label has no parent and is the root $b$ of LexBFS tree of the graph
$G$.

Let $G=(V,E)$ be a connected bipartite graph with $n$ vertices. For
a total ordering $v_1,\ldots,v_n$ of vertices of $G$ and an index
$1\le i\le n,$ we denote by $G_i$ the subgraph of $G$ induced by
$v_1,\ldots,v_i.$ We will say that $G$ is {\it strongly
dismantlable} for the given ordering if the following two properties
are both satisfied for every vertex $v_i$ in $G$:
\begin{itemize}
\item[(1)] If $i>1,$ then $v_i$ has  either one or two neighbors in $G_i$.
\item[(2)] If $v_i$ has two neighbors in $G_i,$ then $v_i$ belongs to a unique 4-cycle of $G_i$.
\end{itemize}

\begin{lemma}
\label{contractible} If a finite connected bipartite graph $G$ is
strongly dismantlable, then the geometric realization $|G|$ is
contractible, and therefore, simply connected.
\end{lemma}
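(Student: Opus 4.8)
The plan is to argue by a sequence of elementary collapses following the dismantling ordering in reverse, exhibiting a deformation retraction of $|G|$ onto a single point. First I would record two structural consequences of the defining conditions. Condition (1) forbids $Q_3$ as a subgraph: the highest-indexed vertex of any copy of $Q_3$ would have its three cube-neighbors already present in the corresponding $G_i$, contradicting the bound of at most two neighbors. Hence $G$ is cube-free, and since $G$ is bipartite every $4$-cycle is induced (any chord would join two vertices in the same part). Consequently $|G|$ is a two-dimensional complex whose cells are exactly the vertices, the edges, and the filled induced $4$-cycles of $G$, so there are no higher-dimensional cells to worry about.

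Next I would show, for each index $i>1$, that $|G_i|$ deformation retracts onto $|G_{i-1}|$, where $G_i$ is the subgraph induced by $v_1,\ldots,v_i$. The cells of $|G_i|$ not already present in $|G_{i-1}|$ are precisely those containing $v_i$: the vertex $v_i$, the one or two edges from $v_i$ to its neighbors in $G_i$, and the squares on the $4$-cycles through $v_i$. By condition (1) there are at most two such edges, and by condition (2), when there are two neighbors there is a unique $4$-cycle through $v_i$, hence at most one such square. There are two cases. If $v_i$ has a single neighbor $w$ in $G_i$, then (having degree one) $v_i$ lies in no $4$-cycle, so the only new cells are $v_i$ and the pendant edge $v_iw$; here $v_i$ is a free vertex of $v_iw$, and the elementary collapse of the pair $(v_i,v_iw)$ retracts $|G_i|$ onto $|G_{i-1}|$.

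If instead $v_i$ has two neighbors $u,w$ in $G_i$ and lies in the unique $4$-cycle $(v_i,u,z,w,v_i)$, then the closed subcomplex consisting of the filled square $S$ together with the two edges $v_iu$ and $v_iw$ is a solid square with $v_i$ as one corner. This subcomplex deformation retracts onto the union of the two opposite sides $uz$ and $zw$ by pushing away from the corner $v_i$ toward the opposite corner $z$; equivalently, $v_iu$ is a free face of $S$ (any $2$-cell containing $v_iu$ would contain $v_i$ and hence equal $S$), so one collapses the pair $(v_iu,S)$ and then the pair $(v_i,v_iw)$. Either way $v_i$, its two edges, and $S$ are removed while everything else is fixed. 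It is essential that the cells $u,z,w$ and the edges $uz,zw$ already belong to $G_{i-1}$; this holds because all four vertices of the $4$-cycle lie in $G_i$ and only $v_i$ carries index $i$.

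Composing these retractions from $i=n$ down to $i=2$ yields a deformation retraction of $|G|=|G_n|$ onto $|G_1|=\{v_1\}$, so $|G|$ is contractible and in particular simply connected. I expect the only delicate point to be the local bookkeeping: verifying that exactly one square and at most two edges are attached at each $v_i$, and that their ``opposite'' cells are already present so that successive collapses never conflict. This is precisely what the two dismantling conditions guarantee, which is why the argument goes through cleanly.
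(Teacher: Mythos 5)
Your proof is correct and takes essentially the same approach as the paper: induction along the dismantling ordering, showing that $|G_i|$ deformation retracts onto $|G_{i-1}|$ by removing $v_i$ together with its pendant edge (degree-one case) or its two edges and the unique square (degree-two case). The only differences are presentational: you make explicit the preliminary observation that condition (1) forces cube-freeness (so $|G|$ is a square complex) and spell out the free-face collapses that the paper delegates to its footnote on Whitehead's elementary contractions.
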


\begin{proof} If $v_i$ has only one neighbor $v'$ in $G_i,$
then mapping $v_i$ to $v'$ yields an elementary contraction from $|G_i|$ to
$|G_{i-1}|$. Otherwise $v_i$ belongs to a unique 4-cycle
$(v_i,v',u,v'',v_i)$ of $G_i.$  Mapping $v_i$ to $u$ yields a
retraction from $G_i$ to $G_{i-1}$ as well as an elementary
contraction of the square complex $|G_i|$ to the square complex
$|G_{i-1}|$.\footnote{More precisely, according to Whitehead's
definition of an elementary contraction in a simplicial
complex~\cite{Whitehead} (p.~247), this step can be represented as a
pair of elementary contractions in a triangulation of $|G|$
constructed by splitting each quadrilateral of $|G|$ arbitrarily
into two triangles.} By induction, we conclude that the square
complex $|G|$ is contractible.
\end{proof}

Observe that, in the LexBFS ordering (or more generally, in any BFS
ordering) of any bipartite graph, any 4-cycle $abcd$ must either be
ordered with two vertices of the same color first and last (as $a$,
$b$, $d$, and $c$) or with two vertices of one color first and the
other color last (as $a$, $c$, $b$, $d$). We say that the former
ordering, $a$, $b$, $d$, $c$, is a {\it proper ordering} of the
cycle and that the latter ordering, $a$, $c$, $b$, $d$, is an {\it
improper ordering}.

\begin{lemma}
\label{lem:sdpo} If $G$ is a strongly dismantlable graph for a
LexBFS ordering of~$G$, then all 4-cycles in $G$ are properly
ordered by that ordering.
\end{lemma}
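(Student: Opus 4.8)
The plan is to argue by contradiction: assume that some 4-cycle of $G$ is improperly ordered and extract from this a violation of condition~(2). The first task is to determine the distances that a BFS ordering forces on a 4-cycle---recall that every LexBFS ordering is a BFS ordering. Since $G$ is connected and bipartite, its two color classes are exactly the two distance-parity classes measured from the base point, and a BFS ordering processes every vertex at distance $k$ before any vertex at distance $k+1$. Applying this to a 4-cycle $(a,b,c,d,a)$, whose color classes are the diagonals $\{a,c\}$ and $\{b,d\}$, leaves only two possibilities: either one diagonal sits at distance $k$ and the other at distance $k+1$, so that the two earliest vertices of the cycle share a color (this is precisely the improper ordering), or the four vertices occupy distances $k,k+1,k+1,k+2$, so that the unique earliest and the unique latest vertex share a color (the proper ordering). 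Hence it suffices to exclude the first pattern.

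So suppose $a,c$ lie at distance $k$ and $b,d$ at distance $k+1$, and recall that in the cycle both $b$ and $d$ are common neighbors of $a$ and $c$. Index the two distance-$(k+1)$ vertices as $w_3,w_4$ in order of appearance, say $w_3=v_j$ and $w_4=v_i$ with $j<i$. When $w_3$ is introduced, the vertices $a$ and $c$ already lie in $G_j$, so condition~(1) makes $\{a,c\}$ the entire neighborhood of $w_3$ in $G_j$, and condition~(2) then places $w_3$ in a unique 4-cycle $(w_3,a,x,c,w_3)$ of $G_j$. The third vertex $x$ is a common neighbor of $a$ and $c$ in $G_j$ with $x\ne w_3$, and, since $w_4$ is introduced only later, $x\ne w_4$. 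Now examine the introduction of $w_4$: again $a,c\in G_i$, so by~(1) the only neighbors of $w_4$ in $G_i$ are $a$ and $c$, whence every 4-cycle through $w_4$ inside $G_i$ has the shape $(w_4,a,y,c,w_4)$ for a common neighbor $y$ of $a,c$. But $G_i$ now contains the two distinct such vertices $w_3$ and $x$, yielding two distinct 4-cycles through $w_4$ and contradicting the uniqueness asserted by~(2).

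The step I expect to require the most care is the second one: one must verify that the vertex $x$ produced by the 4-cycle at $w_3$ is distinct from both $w_3$ and $w_4$ and already belongs to $G_i$, for only then does it supply a genuine second 4-cycle through $w_4$. The existence half of condition~(2) is exactly what furnishes $x$---were $w_3$ instead to lie in no 4-cycle of $G_j$, condition~(2) would already fail at $w_3$, so that degenerate possibility is itself a contradiction. Beyond this bookkeeping, I expect the argument to use nothing about LexBFS other than its being a BFS ordering, the remaining ingredients being the distance-monotonicity of BFS together with the local constraints~(1)--(2).
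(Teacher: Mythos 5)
Your proof is correct and takes essentially the same route as the paper's: at the third-appearing vertex of an improperly ordered 4-cycle, condition~(2) forces a second common neighbor $x$ of the earlier diagonal pair (the paper's vertex $e$), and then the fourth-appearing vertex lies on two distinct 4-cycles among earlier vertices, contradicting the uniqueness in condition~(2). Your opening distance analysis simply re-derives, from the BFS level structure, the proper/improper dichotomy that the paper records as an observation immediately before the lemma.
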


\begin{proof}
Suppose to the contrary that a strongly dismantlable graph $G$
contains a 4-cycle $abcd$ ordered as $a$, $c$, $b$, $d$. For $G$ to
be strongly dismantlable, $b$ would have to be part of another
4-cycle $abce$, with $e$ earlier than $b$ in the ordering. In this
case $adce$ would be a 4-cycle containing $d$, as would $abcd$,
contradicting the uniqueness of the 4-cycle containing $d$ among
earlier-numbered vertices and therefore contradicting the assumption
that $G$ is strongly dismantlable. This contradiction shows that the
improperly ordered 4-cycle $abcd$ cannot exist.
\end{proof}

\begin{lemma}
\label{lem:strong-dis} A connected bipartite graph $G$ is strongly
dismantlable for a LexBFS ordering of~$G$  if and only if
\begin{itemize}
\item[(i)] all labels have size at most two,
\item[(ii)] for each $x$, if $L(x)=(y,z)$, then $|L(y)\cap L(z)|=1$, and
\item[(iii)] no two consecutive vertices in the ordering have equal labels of size two.
\end{itemize}
\end{lemma}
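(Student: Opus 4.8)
The plan is to prove the equivalence one layer at a time, using throughout that for a BFS (in particular LexBFS) ordering the label $L(v_i)$ is exactly the set of neighbors of $v_i$ inside the induced subgraph $G_i$, i.e. the neighbors of $v_i$ numbered before it. First I would dispose of property~(1): since $G$ is connected and the ordering is a BFS ordering, every non-root vertex has a father and hence at least one earlier neighbor, so condition~(1) (``one or two neighbors in $G_i$'') says precisely that $1\le|L(v_i)|\le 2$ for $i>1$; together with the empty label of the root this is exactly~(i). It therefore remains, assuming~(i), to show that property~(2) holds at every vertex if and only if~(ii) and~(iii) both hold.

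Fix a vertex $x=v_i$ with $L(x)=(y,z)$, where $y<z<x$ in the ordering. Because $x$ has exactly the two neighbors $y,z$ in $G_i$ and $G$ is bipartite, the 4-cycles of $G_i$ through $x$ are in bijection with the common neighbors $w\ne x$ of $y$ and $z$ lying in $G_i$, each giving the cycle $x,y,w,z,x$. So property~(2) at $x$ asserts that $y$ and $z$ have exactly one common neighbor other than $x$ among $v_1,\dots,v_{i-1}$. I would then classify such a $w$ by its position relative to $y$ and $z$. The decisive observation is that in a BFS ordering of a bipartite graph the earlier neighbors of $x$ all lie in the BFS layer immediately below $x$, so $y$ and $z$ lie in one common layer; any common neighbor $w$ of $y,z$ lies in an adjacent layer and hence is either earlier than both $y$ and $z$ or later than both, never strictly between them. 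Thus no common neighbor satisfies $y<w<z$.

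Consequently the common neighbors of $y,z$ distinct from $x$ split into two kinds: those with $w<y$, which are exactly the elements of $L(y)\cap L(z)$; and those with $z<w<x$, which (since all labels have size at most two and $y,z$ precede $w$) are exactly the vertices $w$ with $z<w<x$ and $L(w)=(y,z)$. The number of the first kind is $|L(y)\cap L(z)|$, the quantity controlled by~(ii). For the second kind I would invoke the defining LexBFS property that the labels occur in (reverse) lexicographic order along the ordering, so that vertices carrying one and the same label are consecutive; it follows that a second-kind vertex exists for $x$ if and only if the immediate predecessor of $x$ already carries the label $(y,z)$. Hence condition~(iii) (no two consecutive vertices share a size-two label) is equivalent to saying that the second kind is empty for every $x$.

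Assembling these facts, property~(2) says that $|L(y)\cap L(z)|$ plus the number of second-kind vertices equals $1$ for every $x$ with a size-two label, and the implication $(\mathrm{ii})\wedge(\mathrm{iii})\Rightarrow(2)$ is then immediate. The reverse implication is the step I expect to be the main obstacle, since a priori the sum could equal $1$ with first count $0$ and one second-kind vertex. To exclude this I would walk along the consecutive block of vertices carrying a fixed size-two label $(y,z)$: for the first vertex $x_0$ of that block there is no earlier vertex with this label, so its second-kind count is $0$, and property~(2) forces $|L(y)\cap L(z)|=1$. Since $|L(y)\cap L(z)|$ depends only on $y$ and $z$, it equals $1$ for every $x$ in the block, whence~(ii) holds everywhere; applying~(2) once more then makes the second-kind count $0$ throughout, giving~(iii). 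This completes the equivalence.
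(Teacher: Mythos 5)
Your proof is correct, and its skeleton matches the paper's: both arguments rest on the same two structural facts (in a BFS ordering of a bipartite graph, the two earlier neighbors $y,z$ of $x$ lie in a common layer, so a common neighbor of $y,z$ is never ordered strictly between them; and LexBFS places equal-label vertices consecutively), and both classify the 4-cycles of $G_i$ through $x$ into the "properly ordered" ones, counted by $|L(y)\cap L(z)|$, and the "improperly ordered" ones, which correspond to earlier vertices carrying the same label $(y,z)$. Your forward direction is essentially the paper's, recast as the counting identity (number of 4-cycles through $x$ in $G_i$) $= |L(y)\cap L(z)| + \#\{w<x : L(w)=L(x)\}$. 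Where you genuinely diverge is the converse. The paper first proves a separate statement (Lemma~\ref{lem:sdpo}): if $G$ is strongly dismantlable, then no improperly ordered 4-cycle exists at all; its proof is a different mechanism, namely that an improper cycle $a,c,b,d$ forces, via condition (2) at $b$, a second common neighbor $e$ of $a,c$, so that $d$ then lies on two 4-cycles of $G_{\mathrm{ind}(d)}$, a contradiction. You instead rule out the problematic split $1=0+1$ of the counting identity by a minimality argument: the first vertex $x_0$ of an equal-label block has second-kind count zero by definition of "first," so condition (2) at $x_0$ pins down $|L(y)\cap L(z)|=1$, and this value, depending only on $y,z$, propagates to force the second-kind count to vanish everywhere. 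Both routes are sound; yours buys a self-contained, quantitatively transparent proof that makes the auxiliary Lemma~\ref{lem:sdpo} unnecessary (in the paper that lemma serves no other purpose), while the paper's route isolates the exclusion of improper 4-cycles as a clean standalone structural statement about strongly dismantlable graphs.
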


\begin{proof}
Condition (i) rephrases the condition (1) in the definition of
strong dismantlability.  Condition (ii) implies that each $v_i$ with
two neighbors in $G_i$ belongs to a unique properly ordered 4-cycle
in $G_i,$ and conditions (i) and (iii) (together with the fact that
LexBFS orders all vertices having the same label consecutively)
prevent $G$ from containing improperly ordered 4-cycles, so these
conditions together imply that each $v_i$ with two neighbors in
$G_i$ belongs to a unique 4-cycle in $G_i$. Thus, every graph
satisfying the conditions (i)-(iii) of the lemma is strongly
dismantlable.

Conversely, suppose that $G$ is strongly dismantlable. For any $v_i$
that has two earlier  labeled neighbors $y$ and $z$, the unique
4-cycle containing $v_i$ in $G_i$ must be properly ordered by
Lemma~\ref{lem:sdpo}, so its fourth vertex must belong to the labels
of both $y$ and $z,$ hence (ii) is satisfied. If condition (iii) is
not satisfied, then  two consecutive vertices in the ordering have
equal labels of size two, whence these two vertices together with
the vertices of their label form an improperly ordered 4-cycle,
violating Lemma~\ref{lem:sdpo}. Thus, every strongly dismantlable
graph meets all the conditions of the lemma.
\end{proof}

\begin{lemma}
\label{lem:label-intersection} Let $G$ be a median graph ordered by LexBFS. If $x$ is any vertex of $G,$
$y$ and $z$ both belong to $L(x)$, and
$y\ne z$, then $|L(y)\cap L(z)|=1$.
\end{lemma}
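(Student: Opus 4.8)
The plan is to show that if $x$ is a vertex of a median graph $G$ with LexBFS ordering, and $y,z$ are two distinct vertices both appearing in the label $L(x)$, then the labels $L(y)$ and $L(z)$ intersect in exactly one vertex. Here $L(x)$ records the earlier-numbered neighbors of $x$, so $y$ and $z$ are both neighbors of $x$ that were removed from the queue before $x$.

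First I would observe what it means combinatorially for $y$ and $z$ to lie in $L(x)$: both are adjacent to $x$ and both precede $x$ in the ordering. Thus $y$ and $z$ are two neighbors of $x$ at distance $2$ from one another (since $G$ is bipartite and $y,z$ are both neighbors of the common vertex $x$). In a median graph, the median $m = m(x,y,z)$ is a well-defined vertex, and because $y,z \in I(x,y)\cap I(x,z)$ trivially while $d(y,z)=2$, the interval $I(y,z)$ is a single $4$-cycle: there is a unique common neighbor $w$ of $y$ and $z$ other than $x$, forced to exist as the median $m(y,z,b)$ relative to the LexBFS base point $b$ (or simply because $y,z$ have distance $2$ and $x$ is one common neighbor, so cube-freeness/medianness of $G$ pins down the other). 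The key point is that in a median graph two vertices at distance $2$ have \emph{exactly} two common neighbors when they lie on a common $4$-cycle, namely $x$ and one further vertex $w$, where $w$ is the median of $y$, $z$, and the root.

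Next I would translate this into the language of labels. The vertex $w$ is a common neighbor of both $y$ and $z$; I must argue that $w$ precedes both $y$ and $z$ in the LexBFS ordering, so that $w \in L(y)\cap L(z)$, and then that $w$ is the \emph{only} such common earlier neighbor. For the first part, since $y,z \in L(x)$ they already precede $x$; the vertex $w = m(y,z,b)$ lies on a shortest path from the root to each of $y$ and $z$, hence is strictly closer to $b$, and by the BFS property appears earlier than $y$ and $z$. This places $w$ in both labels, giving $|L(y)\cap L(z)|\ge 1$. For uniqueness, suppose $w'$ were another common neighbor of $y$ and $z$ appearing in both labels; then $w'$ too is a common neighbor of the distance-$2$ pair $y,z$, so $\{x,w,w'\}$ would be three common neighbors of $y$ and $z$, forcing an induced $K_{2,3}$ or the $4$-cycles to generate a larger median configuration. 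Since $G$ is a median graph (hence contains no induced $K_{2,3}$), the pair $y,z$ has at most two common neighbors in all of $G$; one of these is $x$ itself, which comes \emph{after} both $y$ and $z$ and so does not lie in either label. Thus $w$ is the unique common neighbor that could appear in both labels, giving $|L(y)\cap L(z)|\le 1$.

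The main obstacle I anticipate is pinning down the interaction between the median structure and the ordering cleanly, specifically verifying that the unique ``other'' common neighbor $w$ is genuinely earlier than $y$ and $z$ in the LexBFS order, rather than being $x$ or some later vertex. The safest route is to identify $w$ explicitly as $m(b,y,z)$: medianness guarantees this vertex exists and lies on geodesics from $b$ to both $y$ and $z$, so $d(b,w) = d(b,y)-1 = d(b,z)-1$, which by the BFS level structure forces $w$ to be numbered before both $y$ and $z$. Combining the lower bound $|L(y)\cap L(z)|\ge 1$ from the existence of this median with the upper bound from the $K_{2,3}$-freeness of median graphs yields the equality $|L(y)\cap L(z)|=1$, completing the proof.
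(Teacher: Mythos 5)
Your proposal follows essentially the same route as the paper: take $w=m(b,y,z)$, argue that $w$ is a common neighbor of $y$ and $z$ lying one level closer to the root so that $w\in L(y)\cap L(z)$, and then invoke $K_{2,3}$-freeness of median graphs to exclude a second common element. However, one step is asserted rather than proved, and it is exactly the point where the median structure and the ordering interact (the point you yourself flagged as the main obstacle): the chain $d(b,w)=d(b,y)-1=d(b,z)-1$ presupposes that $d(b,y)=d(b,z)$. If these two distances differed (necessarily by two, by bipartiteness), then the median $m(b,y,z)$ would be $y$ or $z$ itself rather than a common neighbor of both, and your argument would collapse. The paper's proof opens with precisely this verification: since $y,z\in L(x)$ both precede $x$, and since they are neighbors of $x$ in a bipartite graph traversed by levels, both must lie at level $d(b,x)-1$; if their levels differed by two, $x$ would be ordered between them, contradicting $y,z\in L(x)$. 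You need to include this one-line argument. Separately, your parenthetical alternative --- that ``cube-freeness/medianness of $G$ pins down'' a second common neighbor of $y$ and $z$ --- is false: in a tree (a median graph), two vertices at distance $2$ have exactly one common neighbor, so medianness alone forces nothing; the existence of $w$ genuinely requires taking the median with the root $b$, as in your main argument, so that parenthetical should be deleted.
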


\begin{proof}
Both $y$ and $z$ must be located one step closer from the root $b$
of the LexBFS tree than the vertex $x$, because they have the same
color in the bipartition of $G$ and if their distances from $b$
differed by two, then $x$ would come between them in the LexBFS
ordering. Let $m$ be the  median of $b,y$, and $z$. Since $y$ and
$z$ are at distance two apart via a path through $x$, $m$ must be
adjacent to both $y$ and $z$. Thus, $m$ is one step closer to $b$
than $y$ and $z$, so it appears earlier than them in the LexBFS
ordering and $m\in L(y)\cap L(z)$. If $L(y)\cap L(z)$ contains a
second vertex $m',$  then the vertices $m',m,x,y,z$ induce a
$K_{2,3}$ subgraph, which is impossible in a median graph.
\end{proof}

The next lemma is closely related to Lemma~4
of~\cite{BaChEpp_square}, which states that the farthest vertices
from any given vertex in a cube-free median graph have degree at
most two, and the proof is essentially the same.

\begin{lemma}
\label{lem:pdd-2l} If $G$ is a cube-free median graph, and $G$ is
ordered by LexBFS, then the label of every vertex in $G$ has size at
most two.
\end{lemma}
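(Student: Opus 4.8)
The plan is to argue by contradiction, transcribing the proof of Lemma~4 of~\cite{BaChEpp_square}: if some label had size three or more, the three corresponding neighbours would span an induced cube $Q_3$, contradicting cube-freeness. First I would record the basic consequence of the bipartite BFS structure. Since every LexBFS ordering is a BFS ordering and $G$ is bipartite (being median), the label $L(x)$ of a vertex $x$ consists precisely of those neighbours of $x$ that lie at distance $d_G(b,x)-1$ from the root $b$; call these the \emph{down-neighbours} of $x$. Thus it suffices to bound the number of down-neighbours by two. If $d_G(b,x)\le 1$ this is immediate, so I may assume $d_G(b,x)=d\ge 2$.

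Suppose, for contradiction, that $x$ has three distinct down-neighbours $y_1,y_2,y_3$, each at distance $d-1$ from $b$. For any pair $y_i,y_j$, consider the median $m_{ij}=m(b,y_i,y_j)$. Since $y_i$ and $y_j$ are distinct neighbours of $x$ in a bipartite graph they satisfy $d_G(y_i,y_j)=2$, and since $d_G(b,y_i)=d_G(b,y_j)=d-1$ neither of them can lie on a shortest path from $b$ to the other; hence $m_{ij}\in I(y_i,y_j)$ is forced to be a common neighbour of $y_i$ and $y_j$ at distance $d-2$ from $b$, and in particular $m_{ij}\neq x$. This is exactly the computation already carried out in Lemma~\ref{lem:label-intersection}, which moreover guarantees that $y_i$ and $y_j$ have no further common down-neighbour. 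So each pair $\{y_i,y_j\}$ spans a 4-cycle $x\,y_i\,m_{ij}\,y_j$ in $G$.

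Next I would pass to the canonical hypercube coordinates of the median graph $G$ and track the three cutsets $\Theta_1,\Theta_2,\Theta_3$ containing the edges $xy_1,xy_2,xy_3$; these are pairwise distinct because edges sharing an endpoint lie in distinct $\Theta$-classes. Writing $x=(0,0,0)$ on these three coordinates, one gets $y_i=x+e_i$, and the only common neighbour of $y_i,y_j$ other than $x$ is $x+e_i+e_j$, so $m_{ij}=x+e_i+e_j$, agreeing with $x$ on every remaining coordinate. The median $m(m_{12},m_{13},m_{23})$ then equals $x+e_1+e_2+e_3$ by coordinatewise majority, and it exists in $G$ because $G$ is median. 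The eight vertices so produced agree with $x$ outside the three distinguished coordinates and realize all of $\{0,1\}^3$ there, so the subgraph they induce is isomorphic to $Q_3$.

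This induced $Q_3$ contradicts the cube-freeness of $G$, so no vertex can have three down-neighbours, i.e. every label has size at most two. The one step requiring genuine care is the coordinate bookkeeping of the third paragraph: verifying that each median is forced to be exactly $x+e_i+e_j$ with no discrepancy on the remaining coordinates, and that the median of the three ``opposite'' corners closes up the cube. Once this is in place, the cube-free hypothesis finishes the argument at once, and the rest is a direct translation of the farthest-vertex argument of~\cite{BaChEpp_square}, with ``farthest vertex of degree $\ge 3$'' replaced by ``vertex with three down-neighbours.''
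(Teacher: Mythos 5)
Your proof is correct, and it shares its opening moves with the paper's: both start from three vertices in $L(x)$, both invoke Lemma~\ref{lem:label-intersection} to produce, for each pair of these down-neighbours, a common neighbour closer to the root. Where you diverge is the finish. The paper stays purely at the level of forbidden subgraphs and splits into two cases: either the three pairwise label-intersections coincide, yielding an induced $K_{2,3}$, or they are distinct, yielding an induced $Q_3-v$; each is then dismissed because cube-free median graphs contain neither (using, for the second case, the characterization of cube-freeness via $Q_3-v$ stated earlier in the paper). You instead pass to the canonical $\Theta$-class coordinates: since incident edges of a median graph lie in distinct $\Theta$-classes, each $m_{ij}$ is forced to sit at $x+e_i+e_j$, which makes the three common neighbours automatically distinct (so the paper's $K_{2,3}$ case never needs separate treatment), and one further median $m(m_{12},m_{13},m_{23})$ closes up a full induced $Q_3$, contradicting cube-freeness in its raw form. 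The trade-off: the paper's argument is shorter and needs no coordinates, but leans on the equivalence ``cube-free $\Leftrightarrow$ no $Q_3-v$'' and on $K_{2,3}$-freeness of median graphs; yours uses only the literal definition of cube-free, at the cost of importing the hypercube embedding and the fact that medians in $G$ are computed coordinatewise. That last fact, which you rightly flag as the delicate step, does hold for the reason you want: $G$ is isometric (hence induced and interval-preserving) in the hypercube $Q$, so $m_G(a,b,c)\in I_G(a,b)\cap I_G(b,c)\cap I_G(c,a)\subseteq I_Q(a,b)\cap I_Q(b,c)\cap I_Q(c,a)$, and the latter is the singleton consisting of the coordinatewise majority; it would be worth writing out this one line to make the argument self-contained.
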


\begin{proof}
Suppose for a contradiction that for some vertex $x$, $L(x)$
contains three vertices $u$, $v$, and $w$. By
Lemma~\ref{lem:label-intersection} the labels of each pair of these
three vertices have a nonempty intersection. If there is a vertex
that belongs to all three labels $L(u)$, $L(v)$, and $L(w)$, then
that vertex together with $u$, $v$, $w$, and $x$ induces a $K_{2,3}$
subgraph, which is impossible in a cube-free median graph. On the
other hand, if the three vertices in $L(u)\cap L(v)$, $L(u)\cap
L(w)$, and $L(v)\cap L(w)$ are distinct, then these three vertices
together with $u$, $v$, $w$, and $x$ induce a $Q_3-v$ subgraph,
again impossible in a cube-free median graph.
\end{proof}

\begin{lemma}
\label{lem:pdd-recog} A connected graph $G$ is a partial double tree
if and only if
\begin{itemize}
\item[(i)] $G$ is strongly dismantlable for the LexBFS ordering, and
\item[(ii)]for each vertex $v$, $\Link(x)$ is bipartite.
\end{itemize}
\end{lemma}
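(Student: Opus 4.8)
The plan is to prove both implications by leaning on the already-established equivalence of Theorem~\ref{partial-double-tree}, namely that a graph is a partial double tree if and only if it is a median graph all of whose vertex links are bipartite. Throughout I may assume $G$ is bipartite: a partial double tree is bipartite, and the notion of strong dismantlability is only defined for bipartite graphs, so a non-bipartite $G$ satisfies neither side of the stated equivalence. The remaining task in each direction is then to pass between the metric/median structure and the combinatorial label conditions that a LexBFS ordering exposes.

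For the forward implication, suppose $G$ is a partial double tree. By Theorem~\ref{partial-double-tree} it is a cube-free median graph whose links are all bipartite, so condition~(ii) holds outright. To obtain condition~(i) I would verify the three label conditions of Lemma~\ref{lem:strong-dis}. Condition~(i) of that lemma (labels of size at most two) is delivered by Lemma~\ref{lem:pdd-2l}, and its condition~(ii) ($|L(y)\cap L(z)|=1$ whenever $L(x)=(y,z)$) by Lemma~\ref{lem:label-intersection}. The one step that needs a fresh argument is condition~(iii): that no two consecutive vertices carry equal labels of size two. Here I would argue by contradiction. If two distinct vertices $x,x'$ shared a label $\{y,z\}$, then both would be common neighbors of $y$ and $z$; since $y,z\in L(x)$, Lemma~\ref{lem:label-intersection} supplies a further common neighbor $m\in L(y)\cap L(z)$ lying earlier than $y$ and $z$, hence distinct from the later vertices $x,x'$. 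The three distinct vertices $x,x',m$ together with $y,z$ would then induce a $K_{2,3}$, which is impossible in a median graph. I expect this $K_{2,3}$ extraction to be the main obstacle in this direction, since it is the only place where one must recognize that two equal labels force a third common neighbor.

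For the reverse implication, suppose $G$ is strongly dismantlable for its LexBFS ordering and all its links are bipartite. Lemma~\ref{contractible} makes the geometric realization $|G|$ contractible, and in particular simply connected; bipartiteness of every link makes every link $C_3$-free. These are exactly the hypotheses of condition~(iv) of Theorem~\ref{cube-free} applied to $\mathcal{K}=|G|$, whose underlying graph $G(|G|)$ is $G$ itself. That theorem therefore certifies that $G$ is a cube-free median graph. Being a median graph with all links bipartite, $G$ is a partial double tree by Theorem~\ref{partial-double-tree}, completing the equivalence. The delicate point here is the transition from the purely combinatorial elimination scheme to the topological conclusion: one must check that Lemma~\ref{contractible} indeed yields simple connectivity of the complex realizing $G$ and that the link graphs appearing in condition~(ii) coincide with the links used by Theorem~\ref{cube-free}, after which the median property follows with no further computation.
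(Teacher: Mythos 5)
Your proposal is correct and takes essentially the same approach as the paper: the forward direction verifies the three conditions of Lemma~\ref{lem:strong-dis} using Lemma~\ref{lem:pdd-2l}, Lemma~\ref{lem:label-intersection}, and the identical $K_{2,3}$ argument ruling out two vertices with equal two-element labels, while the reverse direction invokes Lemma~\ref{contractible} for simple connectivity of $|G|$ and then concludes bipartiteness of links gives a partial double tree. The only cosmetic difference is that the paper cites the implication (iv)$\Longrightarrow$(v) of Theorem~\ref{ramified-polygon} at this point, whereas you inline the proof of that implication (Theorem~\ref{cube-free} to get a cube-free median graph, then Theorem~\ref{partial-double-tree}), which is the same chain of reasoning.
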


\begin{proof}
If a connected graph $G$ satisfies the conditions (i) and (ii), then
by Lemma~\ref{contractible} it is the underlying graph of a simply
connected rectangular complex in which by condition (ii) every
$\Link(x)$ is bipartite. Therefore it satisfies condition (iv) of
Theorem~\ref{ramified-polygon}, whence by condition (v) of the same
theorem, $G$ is the underlying graph of a partial double tree.

Conversely, suppose that $G$ is a partial double tree. By condition
(vi) of Theorem~\ref{partial-double-tree}, $G$ is a cube-free median
graph, in particular $G$ is connected and bipartite and satisfies
the conditions of Lemma \ref{lem:label-intersection} and Lemma
\ref{lem:pdd-2l}. By condition (v) of
Theorem~\ref{partial-double-tree}, it satisfies property (ii) of the
lemma. To show that $G$ is strongly dismanlable we verify that each
of the three conditions of Lemma~\ref{lem:strong-dis} is met.
Condition (i) of Lemma~\ref{lem:strong-dis} states that each label
has at most two vertices; this follows by Lemma~\ref{lem:pdd-2l}.
Condition (ii) of Lemma~\ref{lem:strong-dis} must also hold, by
Lemma~\ref{lem:label-intersection}. Finally, condition (iii) must
also hold, because if two vertices $x$ and $y$ both had equal labels
$(u,v)$ then the five vertices in $(L(u)\cap L(v))\cup\{u,v,x,y\}$
would induce a $K_{2,3}$ subgraph, impossible in a partial double
dendron.
\end{proof}

Now assume that $G=(V,E)$ is an arbitrary input graph and consider the following simple
recognition algorithm:

\medskip\noindent
{\bf Step (1):} Run the LexBFS algorithm for $G$ and check whether $G$ is connected
and bipartite. Test whether each vertex $v_i$ has one or two
previously labeled neighbors (i.e., $|L(v_i)|\le 2),$  and test whether each
two-vertex label $L(v_i)$ is different from $L(v_{i-1})$. For each two-vertex
label $L(v_i)=(y,z)$, test that $|L(y)\cap L(z)|=1$. Return the answer
``No" if $G$ fails to pass any one of these tests.

\medskip\noindent
{\bf Step (2):}  Make a list of 4-cycles of the form
$C_i=(v_i,y,z,w)$ where  $L(v_i)=(y,z)$ and $L(y)\cap L(z)=\{w\}$.
Associate with each edge of $G$ a list of the 4-cycles that contain
that edge, by initializing an empty list for each edge object and
then, for each 4-cycle, adding it to the lists of its four incident
edge objects.

\medskip\noindent
{\bf Step (3):} Using the lists returned by Step (2), for  each
vertex $v_i$ construct $\Link(v_i)$: the vertices of $\Link(v_i)$
are the edges in $G$ incident to $v_i$, and the edges incident to
each vertex of $\Link(v_i)$ are given by the 4-cycles incident to
the corresponding edge in $G$. Test whether each constructed graph
$\Link(v_i)$ is bipartite. If some link is not bipartite, then
return the answer ``No"; otherwise, return the answer ``Yes".

\medskip
First we show that if the algorithm returns the answer ``No", then
$G$ is not a partial double dendron. This is obviously true if $G$
is not connected or not bipartite. If the algorithm returns ``No''
because some vertex has a label that is too large, or because the
size of the intersection of two labels is not one, then $G$ fails
one of the conditions of Lemma~\ref{lem:strong-dis}, is not strongly
dismantlable, and hence also fails condition (i) of
Lemma~\ref{lem:pdd-recog}. Finally, if it returns ``No'' because
some vertex has a non-bipartite link, then it fails condition (ii)
of Lemma~\ref{lem:pdd-recog}. In each case, by
Lemma~\ref{lem:pdd-recog}, $G$ cannot be a partial double dendron.

Conversely, suppose that the algorithm returns the answer ``Yes''.
Then $G$ must be connected and bipartite.  The tests in Step (1)
check each condition of Lemma~\ref{lem:strong-dis}, so $G$ must pass
all conditions and must be strongly dismantlable, meeting condition
(i) of Lemma~\ref{lem:pdd-recog}. The test in Step (3) verifies that
each link is bipartite, so $G$ meets condition (ii) of
Lemma~\ref{lem:pdd-recog}. It follows from Lemma~\ref{lem:pdd-recog}
that $G$ is a partial double dendron. This concludes the proof of
Theorem~\ref{recognition}.

Note that we are using LexBFS only to ensure that vertices with
equal labels are ordered consecutively. A very similar algorithm
would work using BFS in place of LexBFS if we modified the test that
no two consecutive vertices have equal length-two labels to test
instead that no two vertices, consecutive or not, have equal
length-two labels. It would also be possible to use an algorithm
from \cite{Ep_arb} in Step~(2) to list all 4-cycles in the graph in
linear time, but this would add unnecessary complexity to the
algorithm as the 4-cycles are more easily obtained directly from the
labeling information gathered in Step~(1). Indeed, if  $G$ passes
the tests in Step (1), then the  list of 4-cycles constituted in
Step (2) must include all 4-cycles in $G$, because it includes all
properly ordered 4-cycles and because the test that rules out pairs
of consecutive equal two-vertex labels eliminates the possibility
that an improperly ordered 4-cycle might exist. Finally note that if
$G$ is a cube-free median graph ordered by LexBFS, $v_i$ is a vertex
of $G$ such that $L(v_i)=(y,z)$ and $L(y)\cap L(z)=\{x\},$ then $y$
is the father of $v_i$ in the LexBFS tree and $x$ is either the
father of both $y$ and $z$ or $x$ is the father of $z$
\cite{BaChEpp_square,Ch_CAT}. This {\it fellow traveller} property
was used in \cite{BaChEpp_square} together with BFS to recognize
squaregraphs, namely, to make a list of 4-cycles similar to that
constructed in Step (2) of our algorithm.

\section{Proof of Theorem \ref{cube-free}}

First we prove the result in the particular case when $\mathcal K$ is a square complex.
For finite square complexes, the equivalence (i)$\Longleftrightarrow$(ii) is essentially
a particular case of Theorems 3.13 and 3.16 of van de Vel \cite{VdV},
which assert that {\it a finite graph $G$ is  median if and only
if the cubical complex $|G|$ equipped with the $l_1$-metric
is a median space.} To conclude the proof in the finite case, it remains
to notice that the underlying graph of a median square complex $\mathcal K$ is cube-free.
Suppose by way of contradiction that $G({\mathcal K})$ contains a
3-cube $Q_3.$ Then only the 2-dimensional faces of $Q_3$ can
correspond to faces of ${\mathcal K}$.  Pick the center $u$ of one
such face and any two diametral points $v,w$ of the opposite face.
The triplet  $u,v,w$ contradicts the assumption that $\mathcal K$ is
median. This shows that the median graph $G({\mathcal K})$ is
cube-free.

To establish the equivalence (i)$\Longleftrightarrow$(ii) for infinite
square complexes, we will show how to adapt  van de Vel's result in
this more general framework. First suppose that $G({\mathcal K})$ is a
cube-free median graph. Pick three arbitrary points $x,y,z\in {\mathcal K}$ and three
shortest $l_1$ paths $\gamma(x,y),\gamma(y,z),$ and $\gamma(z,x)$  in $\mathcal K$ between
these points. Each of these paths intersects the interior of a finite number of squares of
$\mathcal K$ (otherwise, we can transform such a path to a path of the same length but
containing an infinite number of different vertices of $G({\mathcal K}),$ which is impossible).
Consider the convex hull $H$ in $G$ of the vertices of all squares of $\mathcal K$ whose interiors intersect
$\gamma (x,y)\cup \gamma (y,z)\cup \gamma (z,x)$ and of the edges of $\mathcal K$ contained in this union.
The set $H$ is finite as a convex hull of a finite set in a median graph \cite{VdV}. Moreover $H$ is a
median subgraph of $G.$
By Theorem 3.16 of \cite{VdV}, $|H|$ is a median subspace of $|G|={\mathcal K}.$ Therefore
the triplet $x,y,z$ admits a median point in $|H|,$ i.e., a point $m'$ of $|H|$ lying
simultaneously on shortest $l_1$ paths  $\gamma'(x,y),\gamma'(y,z),$ and $\gamma'(z,x)$ of $|H|$ between $x,y,$ and $z.$
From the choice of $\gamma(x,y),\gamma(y,z),$ and $\gamma(z,x)$ and the definition of $|H|$ we
conclude that   $\gamma'(x,y),\gamma'(y,z),$ and $\gamma'(z,x)$ are also shortest $l_1$ paths of
$\mathcal K$, whence $m'$ is also a median of $x,y,$ and $z$ in $\mathcal K.$ If $x,y,z$ admit another median
point $m'',$ then pick any shortest $l_1$ paths  $\gamma''(x,y),\gamma''(y,z),$ and $\gamma''(z,x)$ between $x,y,z,$ and
passing via $m''.$ Consider the finite median subgraph $H'$ of $G$ defined in the same way as $H$ but with respect
to  the six paths $\gamma'(x,y),\gamma'(y,z),\gamma'(z,x),\gamma''(x,y),\gamma''(y,z),$
and $\gamma''(z,x).$ Since $m',m''\in |H'|,$ $m'$ and $m''$ are distinct median points of $x,y,$ and $z$ in $|H'|,$ contrary
to the fact that according to Theorem 3.16 of \cite{VdV}, $|H'|$ is a median space. This shows that (i)$\Longrightarrow$(ii) for arbitrary square complexes.

To establish that  (ii)$\Longrightarrow$(i), let $\mathcal K$ be a square complex such that the metric space $({\mathcal K},d)$ is median.
First we show that $G({\mathcal K})$ is an isometric subspace of $({\mathcal K},d).$ Pick two vertices $x,y$ of $G({\mathcal K})$
and a shortest $l_1$ path $\gamma(x,y)$ in $\mathcal K$. We will transform $\gamma(x,y)$ into a path of  $G({\mathcal K})$
with the same length.  We can assume without loss of generality that $\gamma(x,y)$ does not pass via any vertex $z$ of $G({\mathcal K}),$ otherwise
we can apply our argument separately to the pairs $\{ x,z\}$ and $\{ z,y\}.$ If $\gamma(x,y)$ crosses two incident sides of a square $S$ of
$\mathcal K$, then replacing the subpath of $\gamma(x,y)$ contained in $S$
by the subpath (of the same length) between the intersection points of $\gamma(x,y)$ with the sides of $S$ and going instead via the common vertex of these sides,
we will obtain a shortest $l_1$ path $\gamma'(x,y)$ passing via a vertex of  $G({\mathcal K}),$ to which we can apply the argument above. Therefore we can suppose that if
$\gamma(x,y)$ intersects a square of $\mathcal K$, then it crosses its boundary in two opposite sides. As $x$ and $y$ are vertices, this is possible only
if $\gamma(x,y)$ is a path of the graph $G({\mathcal K}).$ Hence $G({\mathcal K})$ is an isometric subspace of  $({\mathcal K},d).$ Each square and each edge of
$\mathcal K$ are compact convex subsets of $({\mathcal K},d),$ therefore they are  gated sets \cite{VdV}. We assert that the gate of each vertex $v$ of
$G({\mathcal K})$ in this set is also a vertex of the underlying graph.
We will show this for squares $S$ of $\mathcal K$, the proof for edges is analogous. Let $v'$ be the gate of $v$ in $S$ and suppose
that $v'$ is an inner point of the edge $[a,b]$ of $S.$ Let $S'$ be the square of $\mathcal K$ sharing with $S$ the side $[a,b]$ and intersecting
some shortest $l_1$ path between $v$ and $v'.$
If the gate of $v$ in $S'$ belongs to a side incident to $[a,b],$ then necessarily the gate of $v$ in $S$ will be one of the vertices $a$ or $b$ and not $v'.$
So, assume that the gate  of $v$ in $S'$ belongs to the interior of the side of $S'$ opposite to $[a,b].$ Continuing the same reasoning with $S'$ instead of $S,$
we will construct a sequence squares,  such that the gates of $v$ in two consecutive squares of this sequence belong to the interior of the opposite sides
of the first square. This sequence is finite because each time the distance from $v$ to the respective gate decreases by at least by 1. This is obviously impossible,
because the last square of the sequence has $v$ as a vertex.  Hence, indeed the gate of $v$ in each  face or edge of
$\mathcal K$ is a vertex. To complete the proof, it remains to show that the median $m$ in $\mathcal K$ of any three vertices
$x,y,z$ of  $G(\mathcal K)$ is also a vertex of this graph.  Suppose by way of contradiction that $m$ is an inner point of some square
$S$ of $\mathcal K$ (the case when $z$ belongs to the interior of an edge is analogous).
The gates $x',y',z'$ of  $x,y,z$ in $S$ are all vertices of $S.$ Since $m$ belong to the interval between $x$ and $y,$ necessarily there exists a shortest
$l_1$ path between $x$ and $y$  traversing $x',m,$ and  $y'$ in this order. This is possible only if $x'$ and $y'$ are opposite corners of $S.$
Analogously, we deduce that $x',z'$ and $y',z$ are also opposite corners of $S.$ Since this is impossible, necessarily $m$ is a vertex of $G({\mathcal K}).$
As  $G({\mathcal K})$ is an isometric subspace of the median space $({\mathcal K},d),$ we conclude that  $G({\mathcal K})$ is a median graph. This establishes
the implication (ii)$\Longrightarrow$(i).

The equivalence (i)$\Longleftrightarrow$(iii) in the case of square complexes
is a particular case of a result of \cite{Ch_CAT,Ro} which establishes that the underlying graph of
a cubical complex $\mathcal K$ is median if and only if $\mathcal K$
equipped with the $l_2$-metric is $\CAT(0)$. On the other hand, Gromov \cite{Gr}
characterized cubical $\CAT(0)$ complexes in the following pretty
manner: \emph{A cubical complex ${\mathcal K}$  is $\CAT(0)$ if and only
if ${\mathcal K}$ is simply connected and satisfies the following
condition: whenever three $(k+2)$-cubes of ${\mathcal K}$ share a
common $k$-cube and pairwise share common $(k +1)$-cubes, they are
contained in a $(k+3)$-cube of ${\mathcal K}.$} Applying this
characterization to square complexes ${\mathcal K}$, the
combinatorial condition is equivalent to the assertion that
${\mathcal K}$ does not contain three squares which share
a common vertex and pairwise share common edges, i.e., that
$\mathcal K$ does not contain a vertex $x\in V({\mathcal K})$
containing a $C_3$ in $\Link(x).$ In view of previous equivalences,
this shows that (iv) is equivalent to the first three conditions of
Theorem \ref{cube-free}.

It remains to establish the result for rectangular complexes $\mathcal K$. Define a linear map between each rectangular face $R$ of $\mathcal K$ and
the respective face of the square complex $|G({\mathcal K})|.$ One can easily
show that the image of any $l_1$ path of $\mathcal K$ under the resulting piecewise linear map is an $l_1$ path of $|G({\mathcal K})|,$
and vice versa, the preimage of any $l_1$ path of  $|G({\mathcal K})|$ is an $l_1$ path of $\mathcal K$. Therefore, the equivalence (i)$\Longleftrightarrow$(ii)
follows from a similar result for square complexes established above. On the other hand, the equivalence (iii)$\Longleftrightarrow$(iv) is a direct
consequence of another result of Gromov \cite{BrHa,Gr} characterizing the polygonal complexes with $\CAT(0)$ $l_2$-metrics. According to this result,
a rectangular complex $\mathcal K$ with a finite number of isometry types of cells is $\CAT(0)$ if and only if  $\mathcal K$ does not contain a vertex $x\in V({\mathcal K})$
with a $C_3$ in $\Link(x).$ To establish the equivalence between the conditions (i),(ii) and the last two conditions (iii),(iv) notice that, in view
of the result of \cite{Ch_CAT,Ro} mentioned above, the underlying networks of rectangular complexes satisfying these conditions
are the same: they are median and cube-free. Finally, if any of the conditions of Theorem \ref{cube-free} holds, then  $\mathcal K$ equipped
with the intrinsic $l_1$-metric $d$ is median and, by Theorem 3.13(3)  of \cite{VdV} and its extension to arbitrary rectangular complexes, $\mathcal K$
coincides with the geometric realization $|N({\mathcal K})|$ of its network.  This concludes the proof of Theorem \ref{cube-free}.

\section{Proof of Theorem \ref{ramified-polygon}}
To establish the implication (i)$\Longrightarrow$(ii) suppose that
the rectangular complex $\mathcal K$ is isometrically embedded into
the Cartesian product $D=D_1\times D_2$ of two dendrons $D_1$ and
$D_2.$ We wish to show that the median $m$ of any three points $x,y,$ and $z$
of $\mathcal K$ taken in this double dendron must belong to $\mathcal K$. Note that
for any given finite set $F$ of points in $\mathcal K$ we can let the projections $F_1$ and
$F_2$ of $F$ onto $D_1$ and $D_2$ subdivide the line segments, so that $F_1$ and $F_2$ may
be taken as subsets of the vertex sets of $D_1$ and $D_2,$ respectively. In particular,
let $F$ be the set of all vertices of $\mathcal K$ together with $x, y,$ and $z.$ Therefore
the underlying network $N({\mathcal K})$ of the complex $\mathcal K$ is isometrically embedded
into the double tree network $N(D).$ Every shortest path in the network
$N(D)$ is also a shortest path in the double tree $G(D)$, and vice versa.
In particular, the isometric embedding between $N({\mathcal K})$ and $N(D)$ is also
one between the underlying graphs $G({\mathcal K})$ and $G(D).$ Therefore $G({\mathcal K})$
is a partial double tree and thus a median graph by Theorem \ref{cube-free}. Consequently,
the median $m$ of $x, y,$ and $z$ in $G({\mathcal K})$ also serves as the corresponding
median in the underlying double tree $G(D)$ and hence in $D.$


First note that (ii)$\Longleftrightarrow$(iii) holds by virtue of
Theorem \ref{cube-free}.
To show that (iii)$\Longrightarrow$(iv), suppose by way of
contradiction that for some vertex $x\in V({\mathcal K}),$ $\Link(x)$
contains an odd cycle of length $2k+1$ with $k\ge 1$. This means
that we can find $2k+1$ squares $R_1,\ldots,R_{2k+1}$ of
${\mathcal K},$ all sharing the vertex $x$ and constituting an odd
rectangular wheel.  Denote by $xx'_i$ the common edge of the
rectangles $R_i$ and $R_{i+1(mod ~2k+1)}.$ Now, let $B(x,\epsilon)$
be the closed neighborhood with radius $\epsilon>0$ centered at $x$
which is a partial double dendron. On each segment $I(x,x'_i)$ pick
a point $x_i$ located at the same distance $\delta<\epsilon$ from
$x$ such that for each $i$  the square $(x,x_i,y_i,x_{i+1(mod
~2k+1)})$ contained in the rectangle $R_i$ also belongs to the
closed ball $B(x,\epsilon).$ Since inside each rectangle $R_i$ the
distance is measured according to $l_1,$ we conclude that
$(x,x_i,y_i,x_{i+1(mod ~2k+1)})$ is a rectangle of the metric space
$({\mathcal K},d)$ (and thus of $(B(x,\epsilon),d)).$ As a
consequence, we derived an odd rectangular wheel in the partial
double dendron $B(x,\epsilon),$ which is impossible. This
establishes that (iii)$\Longrightarrow$(iv).

To show that (iv)$\Longrightarrow$(v), first notice that since
$\mathcal K$ is simply connected and the links of all vertices are
$C_3$-free, Theorem \ref{cube-free} implies that $G({\mathcal K})$ is a
cube-free median graph. Since each $\Link(x)$ is bipartite, the
graph $G({\mathcal K})$ does not contain odd  cogwheels, thus from Theorem
\ref{partial-double-tree} we infer that $G({\mathcal K})$ is a partial double
tree. The equality ${\mathcal K}=|G({\mathcal K})|$ follows from last assertion of
Theorem \ref{cube-free}.

Then, to show that (v)$\Longrightarrow$(i), notice that if
$G({\mathcal K})$ embeds isometrically into
the Cartesian product of two trees $T_1$ and $T_2,$ then all edges
of $G({\mathcal K})$ from the same cutset color-class have the same length in the
rectangular complex ${\mathcal K}.$ Now, transform each tree $T_i$
into a dendron $D_i$ having the same topology as $T_i$ and obtained by
replacing every edge of $T_i$ by a solid edge  of $D_i$ of length
equal to the length of the edges of ${\mathcal K}$ from the
respective color class. This leads to a natural mapping $\varphi$
from ${\mathcal K}$ to $D=D_1\times D_2.$ It can be checked in a
standard way that this mapping is an isometric embedding of
$\mathcal K$ into $D=D_1\times D_2$.

As for the implication (v)$\Longrightarrow$(vi), if $G=G({\mathcal K})$ is a partial
double tree, then $G$ can be regarded as a median subgraph of two trees $T_1$ and $T_2$ such
that the projection from $G$ to either tree is surjective (Theorem \ref{partial-double-tree}). Then
both trees are finite as ${\mathcal K}$ is finite. The pre-image $T$ of any leaf $t$ of $T_2$ under the
projection from $G$ to $T_2$ is a finite tree, the neighbors in $G$ of which form an isomorphic copy $U$ projected to the neighbor $u$ of $t$.
Then $G$ can be recovered as the convex expansion of $G-T$ along $U$ \cite{Mu1}. If $x$ is a vertex of $U$ having two neighbors  $y$ and $z$ in $U$
such that $x$ is the hub of some $k$-cogfan $H$ in  $G-T$ where $y$ and $z$ have degree 2 in $H$, then the convex expansion extends $H$
to a $(k+2)$-cogwheel in $G.$ Hence $k$ is necessarily even by Theorem \ref{partial-double-tree}. Evidently, ${\mathcal K}=|G|$ is the convex expansion of $|G-T|$ along the
finite dendron $|U|$ by the interval $[0,\lambda],$ where $\lambda$ is the weight of the edges between the subnetworks $G-T$ and $T.$ Note that any gated dendron in $\mathcal K$ is a finite dendron which can be regarded as the geometric realization of some finite tree in the network of some refinement of the complex $\mathcal K$. Therefore a straightforward induction establishes
(v)$\Longrightarrow$(vi).

The implication (vi)$\Longrightarrow$(vii) is trivial because gated expansions are particular instances of gated amalagamations.  Finally, to prove
the implication (vii)$\Longrightarrow$(iv), proceed by a trivial induction on the number of amalgamation steps.
Since amalgamation of median rectangular complexes ${\mathcal K}_1$ and ${\mathcal K}_2$ along gated sets evidently yields a median space,
condition (ii) of Theorem \ref{cube-free} holds, whence ${\mathcal K}$ is simply connected by Theorem \ref{cube-free}. The links of vertices
of $\mathcal K$ which do not belong to the dendron $A$ along which the last amalgamation was performed are as in the respective
constituent and hence bipartite according to the induction hypothesis. Link$_1(x)$ and Link$_2(x)$ of a vertex $x$ on $A$ in the two gated
constituents ${\mathcal K}_1$ and ${\mathcal K}_2$ are glued along the vertices representing the edges on $A$. Since all paths in Link$_1(x)$ and Link$_2(x)$
connecting any pair of the latter vertices have the same parity, no odd circle arises and therefore $\Link(x)$ is bipartite, too.
This completes the proof of
Theorem \ref{ramified-polygon}.

\section{Proof of Theorem \ref{automorphism}} By Frucht's Theorem~\cite{Fru} any group can be
represented as the automorphism group of some graph $F$ (which can be chosen to be
finite if the group is finite). We stipulate that the trivial group
is represented by a nontrivial graph, say, by an asymmetric graph
with six vertices or an asymmetric tree
with seven vertices (Fig.~\ref{asymmetric}). Since every automorphism maps components to
components, we may add an asymmetric tree with seven vertices in the
case that the graph did not yet have this tree as a component.

\begin{figure}[t]
\includegraphics[height=1in]{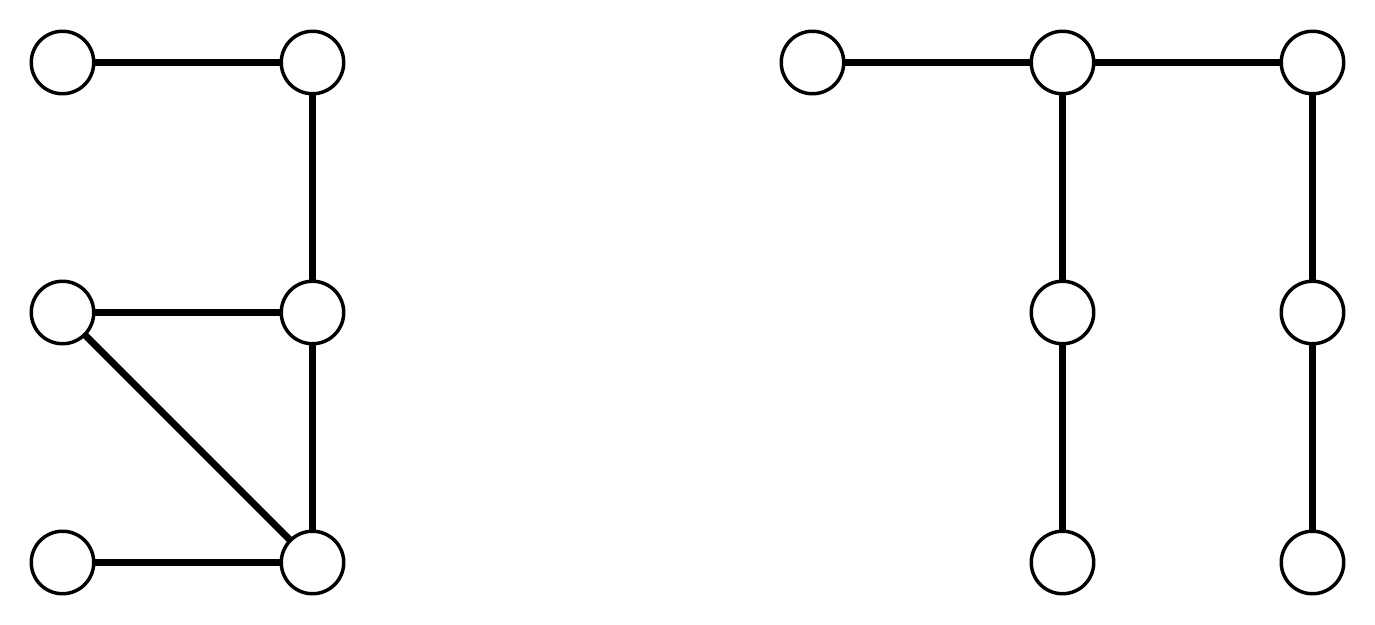}
\caption{A six-vertex asymmetric graph and a seven-vertex asymmetric tree}
\label{asymmetric}
\end{figure}

Next we claim that $F$ and its simplex graph $\kappa(F)$ have isomorphic
automorphism groups. Clearly every automorphism $f$ of $F$ maps
simplices to simplices of the same cardinality and trivially
preserves inclusion between simplices. Therefore $f$ lifts to the
simplex map $\kappa f$ which is an automorphism on $\kappa(F)$. Conversely, every
automorphism of $\kappa(F)$ that fixes the empty simplex is lifted from
some automorphism of $F.$ Since $F$ has a component with a convex
3-path,  all maximal hypercubes in $\kappa(F)$ intersect in a single
vertex, viz. the empty simplex node. This property guarantees that
every automorphism of $\kappa(F)$ fixes the latter vertex, because maximal
hypercubes are mapped onto maximal hypercubes by automorphisms of
$\kappa(F).$

Since $\kappa (F)$ is a non-singleton median graph, the iterated simplex graph $G=\kappa(\kappa(F))$ is 2-connected and has radius 2.
The automorphism groups of $\kappa(F)$ and $\kappa(\kappa(F))$ are isomorphic because the intersection of all maximal
hypercubes (4-cycles) in $\kappa(\kappa(F))$ is the empty simplex node of $\kappa(\kappa(F))$ (as $\kappa(F)$ includes
some 4-cycle). The incompatibility graph $\Inc(\kappa(\kappa(F)))$ of the convex splits of $\kappa(\kappa(F))$ is
isomorphic to the bipartite graph $\kappa(F),$ whence $\kappa(\kappa(F))$ is a partial double tree by Theorem \ref{partial-double-tree}.


It  remains to  prove the last assertion of Theorem \ref{automorphism}, that aut$(G)={}$aut$(|G|)$.
Clearly, every automorphism of $G$ extends to an automorphism of $|G|$; we must show that $|G|$ has no other automorphisms than the ones constructed in this way. To do so, we show that $G$ can be determined uniquely as a graph from the metric structure of $|G|$, without starting from any knowledge of the decomposition of $|G|$ into cells. However, this is straightforward: because of the construction of $G$ as a cube-free simplex graph, the 2-cells of $G$ are exactly the subsets that are isometric to $l_1$ unit squares and that have the additional property that at least two consecutive sides of the square consist of boundary points (points with a neighborhood homeomorphic to a half-plane whose boundary passes through the point). Because $G$ is a simplex graph of a graph without isolated vertices, every vertex or edge of $G$ is incident to a 2-cell in $|G|$, and the edges and vertices of $G$ are then exactly the sides and corners of these squares. Alternatively, one could also recognize the underlying graph $G$ here by the two observations that first the degree 2 vertices $u_i$ $(i\in I)$ of $G$ are exactly the points $p$ of    $|G|$ for which $|G|-\{ p\}$ is a median space and second every vertex $x$ of $G$ is    recognized via the requirement that $d(x,u_i)+d(x,u_j)-d(u_i,u_j)$ be an even integer for all $i,j\in I.$   This
concludes the proof of Theorem \ref{automorphism}.

\bibliographystyle{amsplain}
\bibliography{ramified}
\end{document}